\documentclass[10pt,fleqn]{article}

\usepackage{amsmath}
\usepackage{amssymb}
\usepackage{mathrsfs}
\usepackage{cancel}
\usepackage{authblk}
\usepackage{amsthm}
\usepackage{listings}

\usepackage[dvipsnames]{xcolor}
\usepackage{graphicx}

\usepackage{url}
\usepackage{hyperref}

\newtheorem{defn}{Definition}
\newtheorem{thm}{Theorem}
\newtheorem{prop}{Proposition}

\providecommand{\keywords}[1]
{
  \small	
  \textbf{\textit{Keywords---}} #1
}

\begin{document}

\title{Existence of the C-type renormalisation two-cycle}
\date{4 June 2026}

\author[1]{Zainab Rahman}
\author[2]{Maria Pickett}
\author[3]{Andrew Burbanks\thanks{Corresponding Author: andrew.burbanks@port.ac.uk}}
\affil[1,2,3]{School of Mathematics and Physics, University of Portsmouth, Portsmouth, United Kingdom}

\maketitle

\begin{abstract}
We prove the existence of the so-called C-type renormalisation two-cycle, helping to establish the universality of the C-type route to chaos in families of non-invertible maps of the plane.

Low-dimensional discrete-time dynamical systems play a pivotal role in understanding complex behavior in higher-dimensional systems of differential equations, where differential expansion and contraction (and dissipation) often lead to effective dynamics on lower-dimensional manifolds and surfaces of section.
Period-doubling cascades in families of low-dimensional discrete-time dynamical systems provide one of the canonical routes to chaos. Such cascades display
remarkable universal scaling properties that are explained via the existence (and hyperbolicity) of nontrivial stationary orbits of renormalization group (RG) transformations that encode the observed scaling behaviour.

Families of two-dimensional non-invertible maps, with at least two parameters, that have critical points arising from a fold singularity
(one of the fundamental types, alongside the cusp and the projection of the Whitney umbrella),
exhibit a distinct type of critical scaling, the C-type. An accumulation of parameter values leads to an infinite collection of coexisting attracting cycles of periods $4^n$ or $2\cdot 4^n$ for which, asymptotically, period quadrupling is accompanied by parameter-space scaling and state-space scaling governed by particular universal constants.
Kuznetsov et. al. provided an explanation for this phenomenon in terms of a stationary orbit of period two of the RG transformation for period-doubling.

In this work, we prove the existence of the corresponding renormalisation two-cycle
in a Banach space of (pairs of) pairs of analytic maps and gain rigorous bounds on the
corresponding universal state space scaling constants. This result provides a further step in proving a series of outstanding conjectures concerning distinct universality classes for period-doubling. It extends the recent results for unidirectionally-coupled maps (the FS-type) to bidirectionally-coupled maps, and generalises the framework from fixed points to periodic orbits of the corresponding renormalisation operators. It also provides a further step in establishing the conjectured picture that the C-type universality class is born from the FS-type class via a period-doubling bifurcation in the dynamics of the RG transformation itself.
The proof relies on rigorous (validated)
computations to establish that a variant of Newton's method for the two-cycle is a contraction map.

The C-type scaling regularity is known to occur in a number of dynamical systems of interest, perhaps most notably in biologically-plausible models of nephron blood pressure autoregulation.
\end{abstract}

\keywords{dynamical systems,
renormalisation group,
universality,
period-doubling,
bifurcations,
chaos,
computer-assisted proofs}

\textit{2020 Mathematics Subject Classification (MSC).}
Primary 37E20; Secondary 37E99

\section{Introduction}

\paragraph{Renormalisation and Universality}

A key step in understanding transitions to chaos in low-dimensional discrete-time dynamical systems came with  Feigenbaum's observations~\cite{feigenbaum_quantitative_1978,feigenbaum_universal_1979} alongside Coullet and Tresser~\cite{coullet_iterations_1978} that period-doubling cascades in non-invertible maps of the interval share universal properties, falling into distinct universality classes determined by simple features of the map. For one-parameter families of unimodal maps on the interval with quadratic critical points, exemplified by the model family $f_A:x\mapsto 1-Ax^2$, infinite period-doubling cascades occur. In suitable coordinates, asymptotic scaling regularity is observed in the limit of high period: doubling of the period is accompanied by parameter scaling (in $A$) by a universal constant $1/\delta$ with $\delta\approx 4.669$ and state space scaling (in $x$) by another universal constant $1/\alpha$ where $\alpha\approx-2.502$.

An explanation for this critical scaling behaviour was provided in terms of a Renormalisation Group (RG) transformation that encodes the observed scaling behaviour by composing the map with itself (halving the period of doubled orbits) and then rescaling in order to enforce a chosen normalisation. This transformation is expressed by the Cvitanovi\'c-Feigenbaum~operator,
\begin{equation}
R(g)(x):= a^{-1} g(g(ax)),
\end{equation}
where $a := g(g(0))$, chosen to enforce the normalisation $R(g)(0)=1$ (in which coordinates have been chosen to put the critical point at the origin).
The explanation for universality in period-doubling rests on the existence of a nontrivial stationary point $g^*$ of the transformation, for which $R(g^*)=g^*$, and its hyperbolicity. The stationary point is conjectured to be of saddle type: restricting attention to the dynamically-relevant eigenvalues (i.e., those that do not correspond to infinitesimal coordinate changes), the spectrum of the derivative $DR(g^*)$ at the fixed point has a single expanding eigenvalue $\delta$, with the rest of the spectrum lying inside the open unit disc. Correspondingly, $g^*$ has a one-dimensional unstable manifold that determines a universal family of functions undergoing a period-doubling cascade, and a codimension one stable manifold. This function-space picture provides an explanation for the observed universality that extends to a broad class of  maps having a quadratic critical point, including those derived from higher-dimensional systems of differential equations. Similar universal-scaling scenarios have been identified in a wide variety of low-dimensional dynamical systems. Cvitanovi\'c~\cite{cvitanovic_universality_2017} provides a collection of the establishing literature in this area and Khanin~\cite{khanin_renormalization_2019} summarises the development of the application of the RG approach.

\paragraph{Critical scaling for non-invertible maps of the plane.}
Here, we consider families of two-dimensional non-invertible maps that display a period-doubling cascade. Scaling behaviour is observed at the accumulation of period doublings that appears to fall into universality classes determined by the nature of a critical point of the map.
Three generic types of critical points for two-dimensional non-invertible maps were identified in Kuznetsov et. al. ~\cite{kuznetsov_perioddoubling_1997} corresponding to folds, cusps, and to a projection of the Whitney umbrella. The fold and Whitney umbrella were found to correspond to distinct universality classes for period-doubling, known as the C-type (`Cycle') and FQ-type (`Feigenbaum-Quasiperiodicity'), respectively. In a forthcoming publication, we examine the FQ-type scaling, that occurs in systems where period-doubling and Neimark-Sacker bifurcations coincide, proving the existence of a stationary point of the corresponding renormalisation group transformation. In this paper, we examine the C-type scaling, observed for certain classes of systems where period-doublings interact with saddle-node bifurcations.

Kuznetsov et. al.~\cite{kuznetsov_perioddoubling_1997,kuznetsov_multiparameter_2005,kuznetsov_birth_2008} found critical scaling behaviour associated with maps of fold type in families with at least two parameters.
A prototypical example is given by the following three-parameter family,
\begin{align}
    x_{n+1} &= g(x_n,y_n) := A - x_n^2 + B y_n,\label{eqn:model1}\\
    y_{n+1} &= f(x_n,y_n) := -x_n^2 + C y_n,\label{eqn:model2}
\end{align}
obtained by composing a fold map with a general affine transformation of the plane, followed by a change of coordinates to reduce the number of free parameters.

For the above system, fixed values of the parameter $B$ correspond to a sequence of period-doubling bifurcation curves in the $(A,C)$-plane. In suitable (scaling) coordinates, following the locations of the terminal points of these curves, for successive period doublings, leads to two complementary sets of asymptotic scaling symmetries. Depending on the parameter $B$, there is either a set of coexisting attracting cycles of periods $4^n$, for $n=0,1,2,\ldots$ with an accompanying set of unstable cycles of periods $2\cdot 4^n$ or the converse (with unstable periods $4^n$ and stable periods $2\cdot 4^n$). For both collections of cycles, in the limit of high periods, period-quadrupling is accompanied by parameter space scaling by universal constants $1/\delta_1$ and $1/\delta_2$ (where $\delta_1\approx 92.4$ and $\delta_2\approx 4.19$) and state space scaling by universal constants $a^*=1/\alpha^*$ and $b^*=1/\beta^*$ (where $\alpha^*\approx 6.57$ and $\beta^*\approx 22.1$). The resulting set of coexisting attracting periodic orbits is sometimes termed a \emph{critical quasi-attractor}.

Although the phenomenon is, strictly speaking, one of codimension two, the authors note that the spectrum of the derivative of the relevant operator has a contracting relevant (i.e., non-coordinate-change) eigenvalue $\lambda\approx 0.93$ relatively close to $1$ which may hinder the observation of the scaling scenario (at least for relatively low numbers of period-quadruplings) unless care is taken to tune any available parameters to suppress the corresponding mode (in the model three-parameter system given above in~(\ref{eqn:model1})--(\ref{eqn:model2}), the parameter $B$ may be tuned in order to achieve this).

Kuznetsov et. al.~\cite{kuznetsov_perioddoubling_1997} performed a thorough numerical investigation and proposed an explanation for the universal scaling in terms of a stationary orbit (here, a two-cycle) of the corresponding renormalisation group transformation that acts on doubled orbits by halving their periods and then rescaling.
In this paper, we prove that the conjectured RG two-cycle exists.

\paragraph{Computer-assisted proofs for RG transformations.}
Computer-assisted proofs for RG transformations, and their associated universal scaling scenarios, go back to
Lanford~\cite{lanford_computerassisted_1984}, who provided the first computer-assisted proof of existence and hyperbolicity of the RG fixed point for period-doubling universality in families of unimodal maps of the interval with quadratic critical points, thereby giving the first proof of some of the Feigenbaum conjectures~\cite{feigenbaum_quantitative_1978,feigenbaum_universal_1979}.
Eckmann and Wittwer~\cite{eckmann_computer_1985} extended the work to families of interval maps with critical points of integer degree $d=2n$ for $n\to\infty$ by making use of Borel summability and Ecalle's theory of resurgent functions.
Eckmann, Koch, and Wittwer~\cite{eckmann_computer-assisted_1984} provided a computer-assisted proof of period-doubling universality for area-preserving maps.
Mestel~\cite{mestel_computer_1985} established existence of the relevant RG fixed point for circle maps, and
Stirnemann~\cite{stirnemann_renormalization_1993,stirnemann_existence_1994} for pairs of maps in one complex variable, relevant to universality of critical scaling at the boundary of Siegel discs studied by Burbanks, Osbaldestin, and Stirnemann~\cite{burbanks_holder_1995,burbanks_renormalization_1997,burbanks_rigorous_1998,burbanks_fractal_1998}, in the case of golden mean rotation number. Notable works include Koch and Wittwer~\cite{koch_nongaussian_1986,koch_nontrivial_1994}, 
Gaidashev and Koch~\cite{gaidashev_period_2011a},
Koch~\cite{koch_renormalization_2004,koch_2023},
Arioli and Koch~\cite{arioli_critical_2010} who proved the existence of MacKay's RG fixed point for area-preserving maps~\cite{mackay_renormalisation_1993}, and Gaidashev and Yampolsky~\cite{gaidashev_golden_2016} who completed the picture for golden mean Siegel disc universality.
More recently,
Burbanks, Osbaldestin, and Thurlby~\cite{burbanks_rigorous_2021a} gave computer-assisted proofs for RG fixed points and related eigenfunctions in one-variable systems and unidirectionally-coupled pairs of nonivertible maps of two variables (FS-type)~\cite{burbanks_existence_2023}
in which it was possible to make a reduction to spaces of one-variable maps.
The FS-type scaling is relevant to coupled systems in which one subsystem undergoes the usual period-doubling route to chaos, and the  second accumulates an integral characteristic of the dynamics of the first.


\paragraph{Contribution.} This paper extends the recent work on FS-type criticality for noninvertible maps by moving to systems of fully-coupled pairs of maps of two variables.
The work also proves an open conjecture concerning critical scaling scenarios that may be generic for non-invertible maps of the plane (specifically those corresponding to fold singularities and to the projection of the Whitney umbrella). Together with a forthcoming publication, in which we address existence of the RG fixed point for bidirectionally coupled systems of FQ-type, this helps to complete the scaling scenarios for non-invertible maps in~\cite{kuznetsov_perioddoubling_1997}.
This paper also extends the framework from finding fixed points of the RG transformations for coupled pairs of maps of two variables, to proving the existence of periodic cycles. We explain how to find good domains for the constituent maps of cycles and how to formulate (and bound) an appropriate auxiliary operator.
Finally, it was conjectured~\cite{kuznetsov_birth_2008} that the family of C-type two-cycles (for continuously-varying degree $d$ at the critical point) combines with the family of FS-type RG fixed points at a period-doubling bifurcation in the dynamics of the RG operator itself (an $\varepsilon$-expansion for the two-cycle, perturbing the degree $d$ of the critical point, which is treated analogously to a spatial dimension, is computed in~\cite{kuznetsov_birth_2008} based on this idea). This work provides a further step in establishing the conjectured picture rigorously.
Applications of the C-type universality to dynamical systems of interest include driven R\"ossler-type  oscillators at the edges of synchronisation tongues~\cite{kuznetsov_universality_2001} and coupled electrical circuits. A notable example from Biology occurs in the study of nephron blood-pressure autoregulation: nephrons are functional units in the kidney that respond to changes in arterial blood pressure to help ensure a stable filtration rate. The dynamics of these units is somewhat under-damped and as a result they appear to display a rich collection of dynamical behaviours including sustained oscillations, mode-locking, and period-doubling bifurcations. Biologically-inspired models have been shown to display the C-type criticality~\cite{laugesen_modelling_2011,laugesen_ctype_2011}.

\subsection{Overview}

In Section~\ref{sec:rgops}, we define the relevant renormalisation operator, $R$, and its formal Fr\'echet derivative. The underlying symmetry of families of maps with a critical point of fold-type enables us to take an ansatz for the maps of the desired two-cycle, leading to an equivalent induced operator, $T$ and its formal Fr\'echet derivative.
In Section~\ref{sec:fnspaces}, we describe the basic (analytic) function spaces in which we will work, first defined with respect to the unit polydisc and then defined with respect to general polydiscs.
In Section~\ref{sec:cycles}, we reformulate the problem of finding period-two function-pairs for the operator $T$ in terms of a root-finding problem for the two-cycle on the corresponding product space and, in Section~\ref{sec:approxcycle1}, we use an approximate Newton method on polynomials to find an initial approximate two-cycle of $T$. In order to move to the setting of analytic maps (and the corresponding function spaces), we then need to find suitable polydisc domains on which the operator $T$ is well-defined, which we do in Section~\ref{sec:de}. We then re-expand the initial approximate two-cycle with respect to the new domains, and repeat the Newton method in order to get a good approximate two-cycle.
In order to proceed further, we need to place additional constraints on the domains of the maps (and hence on the corresponding functions spaces) in order that suitable bounds can be maintained during the computer-assisted portion of the proof. To justify this, we first summarise briefly the mathematical framework used for the computer-assisted proof and provide sufficient details to show how bounds are computed and maintained during the composition of maps, in Section~\ref{sec:rigorousframework}.
We discuss the implications for the choice of domains in Section~\ref{sec:domaindetails}: the domains will be chosen so that the power series expansions taken with respect to the domains have small constant terms, the coefficients of the maps of the two-cycle decay quickly enough, and that certain norms involved in compositions of the maps are small enough.
In Section~\ref{sec:quasinewton}, we formulate a quasi-Newton operator and indicate how we establish (by means of a computer-assisted proof) that it is a uniform contraction map on a carefully chosen ball around the approximate two-cycle.
Details of the computer-assisted portion of the proof and the computed bounds and results are given in Section~\ref{sec:results}. We briefly discuss computational issues in Section~\ref{sec:computational}. Conclusions and suggestions for future work are provided in Section~\ref{sec:conclusions}. To ensure that the proof may be reproduced, the code for the computer-assisted portion of the proof is given in a Zenodo repository~\cite{burbanks_2026_ctype_code}.

\section{Renormalisation group transformation}\label{sec:rgops}

In suitable (scaling) coordinates, the asymptotic self-similarity observed in period-doubling cascades of families of maps of two variables may be understood in terms of the following renormalisation group transformation (or renormalisation operator), $R$, that acts on period-doubled orbits by halving their periods and rescaling in the two variables independently.
\begin{defn}[Renormalisation Group transformation]\label{defn:renormop} For pairs of functions $g,f:\mathbb{R}^2\to\mathbb{R}$, we define the renormalisation (doubling) operator $R$ defined by
\begin{align}
R(g, f)(x, y)
&= \left(\begin{array}{r}
a^{-1}g(g(ax, by), f(ax, by)),\\
b^{-1}f(g(ax, by), f(ax, by))
\end{array}\right),
\end{align}
where $a:=g(g(0,0),f(0,0))$ and $b:=f(g(0,0),f(0,0))$ which enforces the normalisation $R(g,f)(0,0)=(1,1)$.\end{defn}

In investigating different types of scaling behaviour (corresponding to distinct universality classes), we may work with equivalent operators that encode the symmetry of each type.

The asymptotic similarity between state space regions at successively smaller scales corresponding to period-quadruplings in C-type systems, can be understood in terms of a two-cycle of $R$: we would like to find pairs of functions $(g_1,f_1)$ and $(g_2,f_2)$, defined on domains $\Omega_1,\Omega_2\subset\mathbb{C}^2$ (respectively), such that
\begin{align}
R(g_1,f_1) &= (g_2,f_2),\\
R(g_2,f_2) &= (g_1,f_1).
\end{align}
Equivalently, we would like to solve the functional equations
\begin{align}
a_1^{-1}g_1(g_1(a_1x,b_1y),f_1(a_1x,b_1y))
&= g_2(x,y)\quad\mbox{on $\Omega_2$},\label{eqn:fnalr1}\\
a_1^{-1}f_1(g_1(a_1x,b_1y),f_1(a_1x,b_1y))
&= f_2(x,y)\quad\mbox{on $\Omega_2$},\label{eqn:fnalr2}\\
a_2^{-1}g_2(g_2(a_2x,b_2y),f_2(a_2x,b_2y))
&= g_1(x,y)\quad\mbox{on $\Omega_1$},\label{eqn:fnalr3}\\
a_2^{-1}f_2(g_2(a_2x,b_2y),f_2(a_2x,b_2y))
&= f_1(x,y)\quad\mbox{on $\Omega_1$},\label{eqn:fnalr4}
\end{align}
where $a_k:=g_k(g_k(0,0),f_k(0,0))$ and $b_k:=f_k(g_k(0,0),f_k(0,0))$ which enforces the normalisation $R(g_k,f_k)(0,0)=(1,1)$. These equations may be regarded as a two-dimensional generalisation of the Feigenbaum-Cvitanovic equation~\cite{feigenbaum_quantitative_1978,feigenbaum_universal_1979,cvitanovic_universality_2017}. We denote the solution to the above functional equations by $(g_1^*,f_1^*,g_2^*,f_2^*)$ and the values for the corresponding scaling constants by $a_1^*,b_1^*,a_2^*,b_2^*$.

In discussing the stability of periodic orbits of the RG transformation (and, indeed, when proving their existence) we make use of the linearised operators $L_k:=DR(g_k,f_k)$.
Let the rescaled maps be denoted ${\tilde{g}}=g(ax,\,by)$ and ${\tilde{f}}=f(ax,\,by)$, then the formal Fr\'echet derivative of the operator $R$ at a particular pair $(g,f)$ acts on perturbations $(\delta g,\delta f)$ as follows,
\begin{align}\label{eqn:two-var-frechet2}
&DR(g,\ f)({\delta g},\ {\delta f})(x,\ y)={}\nonumber\\
&\qquad\left(
\begin{array}{c}
-a^{-2}{\delta a}\cdot g(\tilde{g},\tilde{f})
+a^{-1}\bigl({\delta g}(\tilde{g}, \tilde{f})
    +\partial_1g(\tilde{g}, \tilde{f})\cdot{\delta\tilde{g}}
    +\partial_2g(\tilde{g}, \tilde{f})\cdot{\delta\tilde{f}}
\bigr)\\
-b^{-2}{\delta b}\cdot f(\tilde{g},\tilde{f})
+b^{-1}\bigl({\delta f}(\tilde{g}, \tilde{f})
    +\partial_1f(\tilde{g}, \tilde{f})\cdot{\delta\tilde{g}}
    +\partial_2f(\tilde{g}, \tilde{f})\cdot{\delta\tilde{f}}
\bigr)
\end{array}\right),
\end{align}%
where the perturbations $\delta\tilde{g}$, $\delta\tilde{f}$ are given by
\begin{align}
{\delta\tilde{g}}
&=
{\delta g}(ax,\,by)
+\partial_1g(ax,\,by)\cdot{\delta a}\cdot x
+\partial_2g(ax,\,by)\cdot{\delta b}\cdot y,\\
{\delta\tilde{f}}
&=
{\delta f}(ax,\,by)
+\partial_1f(ax,\,by)\cdot{\delta a}\cdot x
+\partial_2f(ax,\,by)\cdot{\delta b}\cdot y,
\end{align}
and
\begin{align}
{\delta a}
&=
{\delta g}(g(0,0),\ f(0,0))
+ \partial_1g(g(0,0),\ f(0,0))\cdot {\delta g}(0,0)\nonumber\\
&\quad{}+ \partial_2g(g(0,0),\ f(0,0))\cdot {\delta f}(0,0),\\
{\delta b}
&=
{\delta f}(g(0,0),\ f(0,0))
+ \partial_1f(g(0,0),\ f(0,0))\cdot {\delta g}(0,0)\nonumber\\
&\quad{}+ \partial_2f(g(0,0),\ f(0,0))\cdot {\delta f}(0,0).
\end{align}

\subsection{Equivalent C-type Operator and Fr\'{e}chet Derivative}

The pairs $(g, f)$ forming the two-cycle of so-called C-type correspond to maps with critical points at the origin associated with a singularity of the Jacobian determinant of fold-type, canonically $(x,y)\mapsto(x^2,y)$, and can thus be written as a pair functions in $x^2$ and $y$~\cite{kuznetsov_perioddoubling_1997,kuznetsov_multiparameter_2005,kuznetsov_birth_2008}.
We therefore make the ansatz
\begin{align}
g(x,y)&=G(x^2,y)=:G(X,Y),\\
f(x,y)&=F(x^2,y)=:F(X,Y),
\end{align}
where $(X,Y):=(x^2,y)$, which induces an equivalent operator $T$ given by
\begin{equation}
T(G, F)(X, Y) = \left(\begin{array}{l}
a^{-1}G(G(a^2X,bY)^2,F(a^2X,bY))\\
b^{-1}F(G(a^2X,bY)^2,F(a^2X,bY))\\
\end{array}\right),
\end{equation}
for which
\begin{equation}
R(g,f)(x,y)=T(G,F)(X,Y),
\end{equation}
In the above, we therefore have $a=G(G(0,0)^2,F(0,0))$, $b=F(G(0,0)^2,F(0,0))$.

The 2-cycle of $R$ conjectured by Kuznetsov et al. corresponds to a 2-cycle of the above operator $T$.
If $(G_1,F_1),(G_2,F_2)$ forms a period-2 cycle of $T$, then the constituent maps must satisfy the following simultaneous functional equations
\begin{align}
    a_1^{-1}G_1\big(G_1(a_1^2X, b_1Y)^2,F_1(a_1^2X, b_1Y)\big)
    &= G_2(X,Y)\quad\mbox{on $\Omega_2$},\label{eqn:fnalt1}\\
    b_1^{-1}F_1\big(G_1(a_1^2X, b_1Y)^2,F_1(a_1^2X, b_1Y)\big)
    &= F_2(X,Y)\quad\mbox{on $\Omega_2$},\label{eqn:fnalt2}\\
    a_2^{-1}G_2\big(G_2(a_2^2X, b_2Y)^2,F_2(a_2^2X, b_2Y)\big)
    &= G_1(X,Y)\quad\mbox{on $\Omega_1$},\label{eqn:fnalt3}\\
    b_2^{-1}F_2\big(G_2(a_2^2X, b_2Y)^2,F_2(a_2^2X, b_2Y)\big)
    &= F_1(X,Y)\quad\mbox{on $\Omega_1$}.\label{eqn:fnalt4}
\end{align}
As above, we will denote the solution maps to the above functional equations by $(G_1^*,F_1^*,G_2^*,F_2^*)$ and recall that the values for the corresponding scaling constants will be denoted by $a_1^*,b_1^*,a_2^*,b_2^*$.

We use the following notation for the rescaled maps
\begin{align}
    \widetilde{G} &= G(a^2X,bY),\\
    \widetilde{F} &= F(a^2X,bY).
\end{align}
The Fr\'{e}chet derivative of $T$ is then given formally as follows:
\begin{align}
&DT(G,\ F)(\delta G,\ \delta F)(X,\ Y)
={}\nonumber\\
&\begin{pmatrix}
-a^{-2}\delta a\cdot G(\widetilde{G}^2, \widetilde{F}) + a^{-1}\Big( \delta G(\widetilde{G}^2, \widetilde{F}) + \partial_1 G(\widetilde{G}^2, \widetilde{F})\cdot 2\widetilde{G}\cdot\delta\widetilde{G} + \partial_2 G(\widetilde{G}^2, \widetilde{F})\cdot \delta \widetilde{F} \Big)\\
-b^{-2}\delta b\cdot F(\widetilde{G}^2, \widetilde{F}) + b^{-1}\Big( \delta F(\widetilde{G}^2, \widetilde{F}) + \partial_1 F(\widetilde{G}^2, \widetilde{F})\cdot 2\widetilde{G}\cdot\delta\widetilde{G} + \partial_2 F(\widetilde{G}^2, \widetilde{F})\cdot \delta \widetilde{F} \Big)
\end{pmatrix}\label{eqn:frechett1}
\end{align}
where
\begin{align}
\delta\widetilde{G}
&= \delta G(a^2X,bY) + \partial_1 G(a^2X,bY)\cdot 2a \cdot \delta a \cdot X + \partial_2 G(a^2X,bY)\cdot \delta b \cdot Y,\\
\delta \widetilde{F}
&= \delta F(a^2X,bY) + \partial_1 F(a^2X,bY)\cdot 2a \cdot \delta a \cdot X + \partial_2 F(a^2X,bY)\cdot \delta b \cdot Y,
\end{align}
and
\begin{align}
\delta a
&= \delta G(G(0,0)^2,F(0,0)) + \partial_1 G(G(0,0)^2,F(0,0))\cdot 2G(0,0)\cdot\delta G(0,0)\nonumber\\
&\quad{}+\partial_2 G(G(0,0)^2,F(0,0))\cdot \delta F(0,0),\\
\delta b
&= \delta F(G(0,0)^2,F(0,0))+ \partial_1 F(G(0,0)^2,F(0,0))\cdot 2G(0,0)\cdot\delta G(0,0)\nonumber\\
&\quad{}+\partial_2 F(G(0,0)^2,F(0,0))\cdot \delta F(0,0).
\label{eqn:frechettn}
\end{align}

\subsection{The conjectured two cycle}

Kuznetsov et al.~\cite{kuznetsov_perioddoubling_1997,kuznetsov_multiparameter_2005,kuznetsov_birth_2008} conjectured, and provided strong numerical evidence for, fixed points of the quadrupling transformation $R^2:=R\circ R$ corresponding to a particular two-cycle for $R$ (hence $T$) with the following values for the state-space scaling constants.
Letting
$\alpha_1^*=1/a_1^*$,
$\alpha_2^*=1/a_2^*$,
$\beta_1^*=1/b_1^*$,
and $\beta_2^*=1/b_2^*$,
the relevant factors are quoted as~\cite{kuznetsov_perioddoubling_1997}
$\alpha^*:=\alpha_1^*\alpha_2^*\approx 6.565350$,
$\beta^*:=\beta_1^*\beta_2^*\approx 22.120227$.
Coefficients for polynomial approximations of the constituent maps are provided in~\cite{kuznetsov_perioddoubling_1997}.
Numerical values for the leading dynamically-relevant eigenvalues of the derivative $D(R^2)(G_1,F_1)$ are given by
$\delta_1 \approx 92.43126348$,
$\delta_2 \approx 4.19244418$,
and $\lambda \approx 0.93$. Note that an additional expanding eigenvalue was identified, but determined to correspond to a coordinate-change.

\section{Function spaces}\label{sec:fnspaces}

Here, we will define the function spaces in which we will work. In short, we take the Banach algebra $\mathscr{A}(\Omega_0)$ of functions analytic on the unit polydisc $\Omega_0$, and continuous on its closure $\overline{\Omega_0}$, with finite $\ell_1$-norm of power series coefficients, and make a change of variables $F\mapsto F\circ\psi$ via a map $\psi:{\Omega}\to{\Omega_0}$ to induce an isomorphic Banach algebra of functions analytic on a general polydisc $\Omega$. The induced Banach algebra has a natural sup-norm $\|\cdot\|_{\Omega,\infty}$ isometric to the sup-norm on $\mathscr{A}(\Omega_0)$. However, it is more convenient to use the pullback isometry through $\psi$ of the $\ell_1$-norm on $\mathscr{A}(\Omega_0)$ to provide a norm $\|\cdot\|_{\Omega,1}$ of (weighted) $\ell_1$-type on $\mathscr{A}(\Omega)$. In particular, this is useful for bounding the action of linear operators (specifically the Fr\'echet derivatives of the operators $T_k$). Working in these restricted classes of functions $\mathscr{A}(\Omega_k)$ also ensures that (for carefully chosen domains) the power series coefficients of the desired maps $G_k,F_k\in\mathscr{A}(\Omega_k)$ decay quickly.

\begin{defn}[The unit polydisc algebra, $\mathscr{A}(\Omega_0)$]
Let $\Omega_0:=D(0,1)\times D(0,1)\subset\mathbb{C}^2$ denote the unit bidisc.
Take
$\widehat{G},\widehat{F}\in\mathscr{A}(\Omega_0)$, the space
of functions analytic on $\Omega_0$
and continuous on its closure $\overline{\Omega_0}$, with finite $\ell_1$-norm of power series coefficients.
\end{defn}
Functions $\widehat{H}\in \mathscr{A}(\Omega_0)$ have power series expansions convergent on $\Omega_0$: for $(U,V)\in\Omega_0$ we have
\begin{equation}
\widehat{H}(U,V)
= \sum_{0\le j,k}^{\infty} \widehat{H}_{jk}\widehat{e}_{jk}(U,V)
:=\sum_{0\le j,k}^{\infty} \widehat{H}_{jk}U^jV^k,
\end{equation}
with the algebra operations (together with composition) given by
\begin{align}
{(a\widehat{F}+b\widehat{G})}(U,V)&:=a\widehat{F}(U,V)+b\widehat{G}(U,V),\\
{(\widehat{F}\cdot\widehat{G})}(U,V)&:=\widehat{F}(U,V)\, \widehat{G}(U,V),\\  {\widehat{H}\circ(\widehat{F}\oplus\widehat{G})}(U,V)&:=\widehat{H}\bigl(\widehat{F}(U,V),\widehat{G}(U,V)\bigr).
\end{align}
This forms a Banach algebra with Schauder basis $(\widehat{e}_{jk})_{0\le j,k}$ and $\ell_1$-norm of power series coefficients given by
\begin{equation}
{\|\widehat{H}\|} = \|\widehat{H}\|_1 := \sum_{0\le j,k}^{\infty}|\widehat{H}_{jk}|.
\end{equation}
(We note that the composition operation is well-defined in $\mathscr{A}(\Omega_0)$ for $\|\widehat{G}\|_1,\|\widehat{F}\|_1<1$.)

We will now use this structure to induce an isometrically isomorphic Banach algebra of functions analytic on a general polydisc.
\begin{defn}[The (general) polydisc algebra, $\mathscr{A}(\Omega)$]
Let
\begin{equation}
\Omega := D(c, r)\times D(d, s)\subset\mathbb{C}^2,
\end{equation}
and consider the affine map $\psi:\Omega\to\Omega_0$ to the unit bidisc given by
\begin{equation}
\psi:(X, Y)
\mapsto(U,V)
:=\left(\frac{X-c}{r}, \frac{Y-d}{s}\right).
\end{equation}
We take $\psi$ as a change of coordinates to define certain maps analytic on $\Omega$ via $\widehat{G}\mapsto\widehat{G}\circ\psi=:G$ and denote the set of such maps $\mathscr{A}(\Omega)$. Thus functions $G\in\mathscr{A}(\Omega)$ are analytic on $\Omega$, and continuous on its closure and have the decomposition $G=\widehat{G}\circ\psi$ for some map $\widehat{G}\in\mathscr{A}(\Omega_0)$.
\end{defn}
Thus
\begin{equation}
G(X, Y)
= \widehat{G}\left(\frac{X-c}{r}, \frac{Y-d}{s}\right)
= \sum_{0\le j,k} \widehat{G}_{jk}\left(\frac{X-c}{r}\right)^j\left(\frac{Y-d}{s}\right)^k.
\end{equation}
We note the following induced operations (that are isomorphic to those on $\mathscr{A}(\Omega_0)$, except for the notable additional subtlety involved in composition):
\begin{align}
aG+bF
&=(a\widehat{G}+b\widehat{F})\circ\psi,
\label{eqn:banachiso1}\\
G\cdot F
&= (\widehat{G}\cdot\widehat{F})\circ\psi,
\label{eqn:banachiso2}\\
H\circ (G,F)
&= \left(\widehat{H}\circ\psi\circ(\widehat{G},\widehat{F})\right)\circ\psi.
\label{eqn:banachiso3}
\end{align}
Note further that we have (trivially) an isometry for the sup-norm, in the sense that
\begin{equation}
\|G\|_{\Omega,\infty}
=\|\widehat{G}\circ\psi\|_{\Omega,\infty}
=\|\widehat{G}\|_{\psi(\Omega),\infty}
=\|\widehat{G}\|_{\Omega_0,\infty}.
\end{equation}
This leads us to define a natural $\ell_1$-type norm compatible with the sup-norm via the pullback isometry through $\psi$:
\begin{equation}
\|G\|_{\Omega,1}
=\|\widehat{G}\circ\psi\|_{\Omega,1}
:=\|\widehat{G}\|_{\Omega_0,1}.
\end{equation}
We denote the resulting Banach algebra by $\mathscr{A}(\Omega)$ and work with the natural Schauder basis $(e_{jk})_{0\le j,k}$ where $e_{jk}:=\widehat{e}_{jk}\circ\psi$.
Note that the resulting norm $\|G\|_{\Omega,1}=\|\widehat{G}\|_1$ is a sub-multiplicatively weighted $\ell_1$-norm with respect to the power series expansion of $G$ itself about $(c, d)$.
(For general results concerning functions of several complex variables analytic on polydisc domains, see~\cite{rudin_function_1969}.)

\section{Method for cycles}\label{sec:cycles}

In what follows, we will work with distinct domains $\Omega_k$ for $k=1,2$ for each pair $(G_k,F_k)$ of functions forming the two-cycle of the operator $T$.
More specifically, we will consider the following domains (bidiscs):
\begin{align}
\Omega_1 &:= D(c_1, r_1)\times D(d_1, s_1),\\
\Omega_2 &:= D(c_2, r_2)\times D(d_2, s_2),
\end{align}
and pairs of maps analytic on each domain
\begin{align}
P_1:=(G_1,F_1)&\in\mathscr{A}(\Omega_1)^2,\\
P_2:=(G_2,F_2)&\in\mathscr{A}(\Omega_2)^2.
\end{align}
We then denote the operators between them corresponding to $T$ acting on $\mathscr{A}(\Omega_1)^2$ and $T$ acting on $\mathscr{A}(\Omega_2)^2$ by
\begin{align}
T_1:\mathscr{A}(\Omega_1)^2&\to \mathscr{A}(\Omega_2)^2,\\
T_2:\mathscr{A}(\Omega_2)^2&\to \mathscr{A}(\Omega_1)^2.
\end{align}
Thus, acting on the product space, we have
\begin{equation}
\mathcal{T}:=
\mathrm{diag}(T_1,T_2):\mathscr{A}(\Omega_1)^2\oplus\mathscr{A}(\Omega_2)^2
\to\mathscr{A}(\Omega_2)^2\oplus\mathscr{A}(\Omega_1)^2.
\end{equation}
We want to find the solutions to
\begin{align}
P_1-T_2(P_2) &= 0,\\
P_2-T_1(P_1) &= 0.
\end{align}
We write, for $V=(P_1,P_2)$,
\begin{equation}
\mathcal{T}(V)
= \mbox{diag}(T_1,T_2)(P_1,P_2)
= (T_1(P_1), T_2(P_2)).
\end{equation}
In order to conveniently express periodic cycles of period $p$, we define the cyclic shift operator on pairs of maps:
\begin{equation}
S:\bigoplus_{j=1}^{p}\mathscr{A}(\Omega_j)^2
\to\bigoplus_{j=1}^{p}\mathscr{A}(\Omega_j)^2,
\end{equation}
by
\begin{equation}
S
=\left(\begin{array}{lllll}
0 &0 &\cdots &0 &I\\
I &0 &\cdots &0 &0\\
0 &I &\cdots &0 &0\\
\vdots &\vdots &\ddots &\vdots &\vdots\\
0 &0 &\cdots &I &0
\end{array}\right).
\end{equation}

For $p=2$, we have $S:\mathscr{A}(\Omega_1)^2\oplus \mathscr{A}(\Omega_2)^2\leftrightarrow \mathscr{A}(\Omega_2)^2\oplus \mathscr{A}(\Omega_1)^2$ and $S$ simply swaps the pairs $P_1,P_2$:
\begin{equation}
S
=\left(\begin{array}{ll}
0 &I\\
I &0
\end{array}\right).
\end{equation}
In order to find a two-cycle, we therefore seek a root of the operator $\mathcal{F}$ given by
\begin{equation}
\mathcal{F} = I-S\mathcal{T}:
\mathscr{A}(\Omega_1)^2\oplus\mathscr{A}(\Omega_2)^2
\to\mathscr{A}(\Omega_1)^2\oplus\mathscr{A}(\Omega_2)^2.
\end{equation}
Note that $\mathcal{F}$ is a self-map of the direct sum $\mathscr{A}(\Omega_1)^2\oplus\mathscr{A}(\Omega_2)^2$. Later (in Section~\ref{sec:results}) we will make use of the two natural equivalent norms on the direct sum given by
\begin{equation}
\|(G_1,F_1,G_2,F_2)\|_{\mathrm{sum},1}
:=\|G_1\|_{\Omega_1,1}
+\|F_1\|_{\Omega_1,1}
+\|G_2\|_{\Omega_2,1}
+\|F_2\|_{\Omega_2,1},
\end{equation}
and
\begin{equation}
\|(G_1,F_1,G_2,F_2)\|_{\mathrm{max},1}
:=\max\left\{\|G_1\|_{\Omega_1,1},
\|F_1\|_{\Omega_1,1},
\|G_2\|_{\Omega_2,1},
\|F_2\|_{\Omega_2,1}\right\}.
\end{equation}
The Fr\'echet derivative of $\mathcal{F}$ is given by 
\begin{equation}
D\mathcal{F}(V)
= I-S D\mathcal{T}(V),\label{eqn:frechetcurlyt}
\end{equation}
since $S$ is a linear operator. In detail,
\begin{equation}
D\mathcal{F}(P_1,P_2)
=I-S D\mbox{diag}(T_1,T_2)(P_1,P_2).
\end{equation}
Thus
\begin{align}
D\mathcal{F}(P_1,P_2)(\delta P_1, \delta P_2)
&= (\delta P_1-DT_2(P_2)\delta P_2,\ \delta P_2-DT_1(P_1) \delta P_1).
\end{align}

We now take an initial guess for the maps of the two-cycle and improve it using the Newton method for $\mathcal{F}$.


\begin{figure}
\centering
\begin{tabular}{cc}
\includegraphics[width=0.45\textwidth,trim={0.1in 0in 0in 0in},clip]{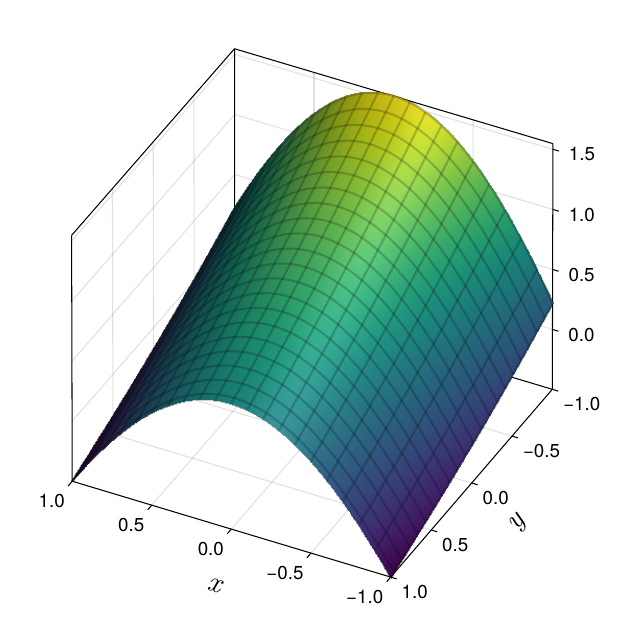}&
\includegraphics[width=0.45\textwidth,trim={0.1in 0in 0in 0in},clip]{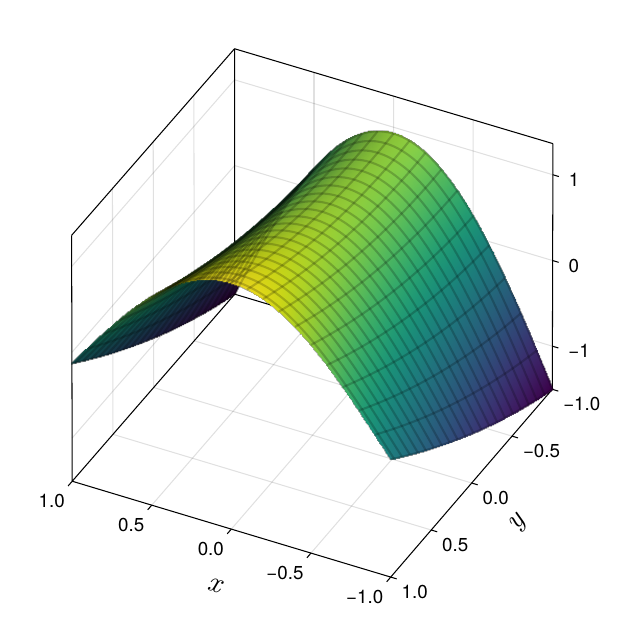}\\
(a) $g_1(x, y)$&
(b) $f_1(x, y)$\\
\includegraphics[width=0.45\textwidth,trim={0.1in 0in 0in 0in},clip]{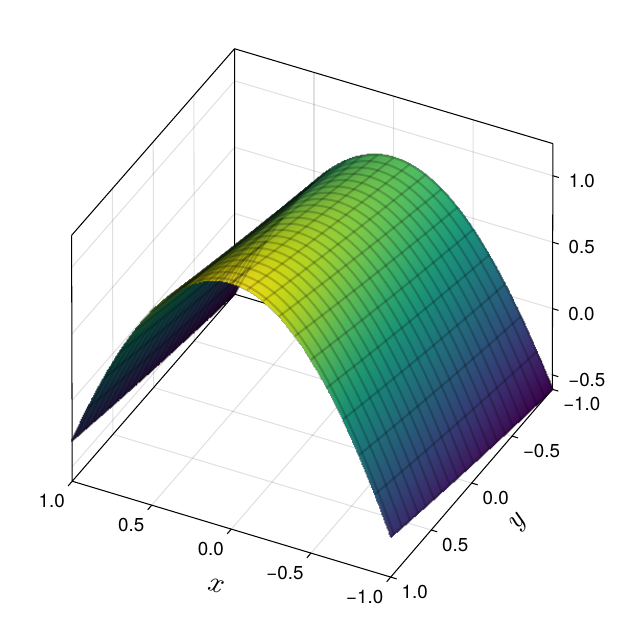}&
\includegraphics[width=0.45\textwidth,trim={0.1in 0in 0in 0in},clip]{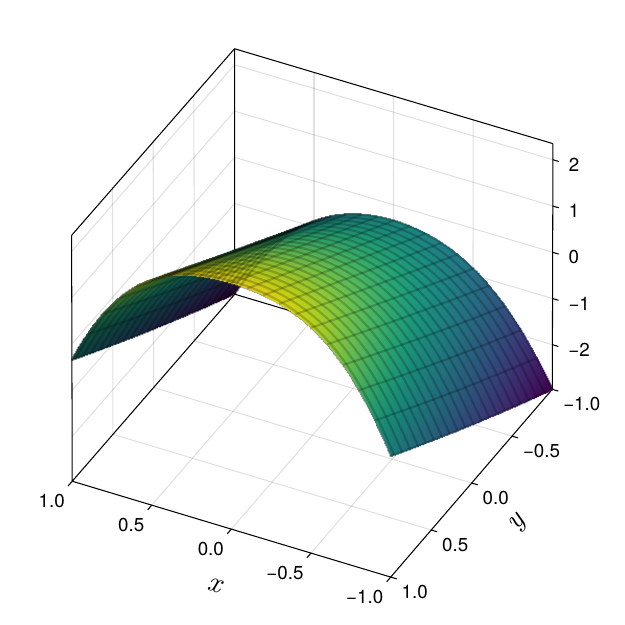}\\
(c) $g_2(x, y)$&
(d) $f_2(x, y)$
\end{tabular}
\caption{The constituent maps of the RG two-cycle for $R$: (a) $g_1(x,y)$, (b) $f_1(x,y)$, (c) $g_2(x, y)$, (d) $f_2(x, y)$, computed using $g_k^0(x,y):=G_k^0(x^2,y)$ and $f_k^0(x,y):=F_k^0(x^2,y)$. (Note the reversal in the $y$-axis direction, chosen for clarity.)}
\label{fig:goodapproxsmall}
\end{figure}

\begin{figure}
\centering
\begin{tabular}{cc}
\includegraphics[width=0.45\textwidth,trim={0.2in 0in 0in 0.1in},clip]{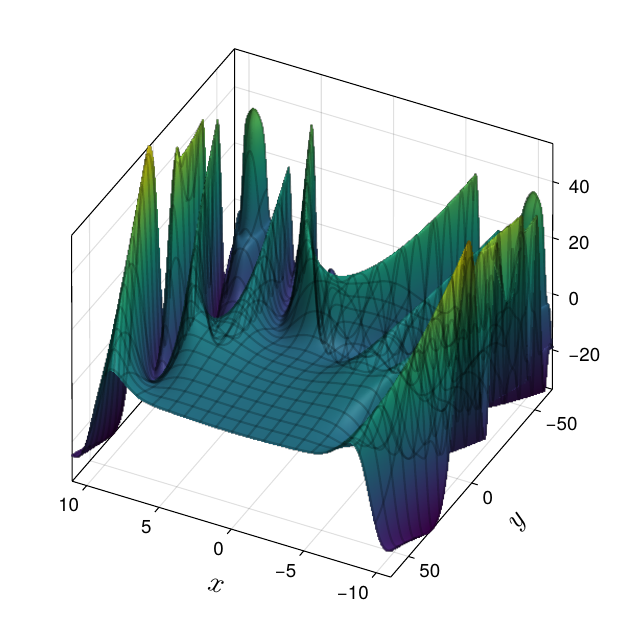}&
\includegraphics[width=0.45\textwidth,trim={0.2in 0in 0in 0.1in},clip]{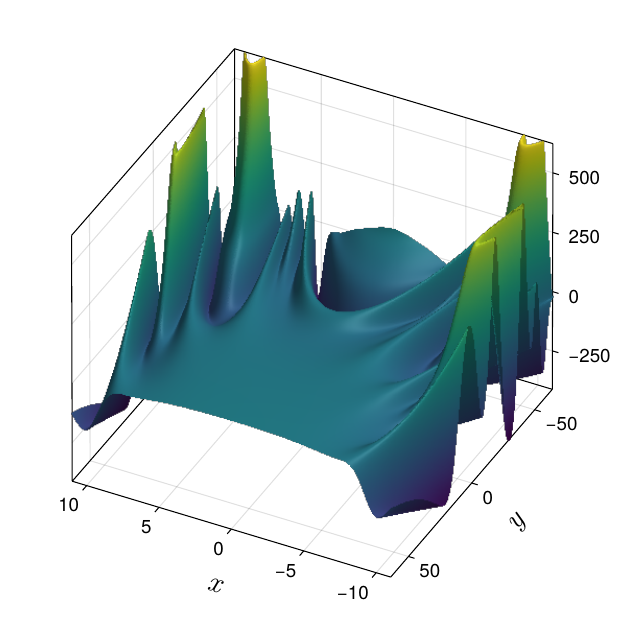}\\
(a) $g_1(x, y)$
& (b) $f_1(x, y)$\\
\includegraphics[height=0.45\textwidth,trim={0.2in 0in 0in 0.1in},clip]{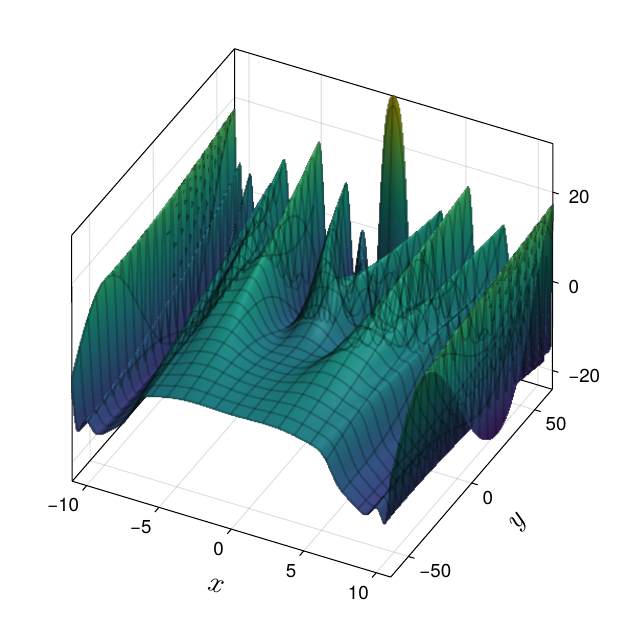}&
\includegraphics[height=0.45\textwidth,trim={0.2in 0in 0in 0.1in},clip]{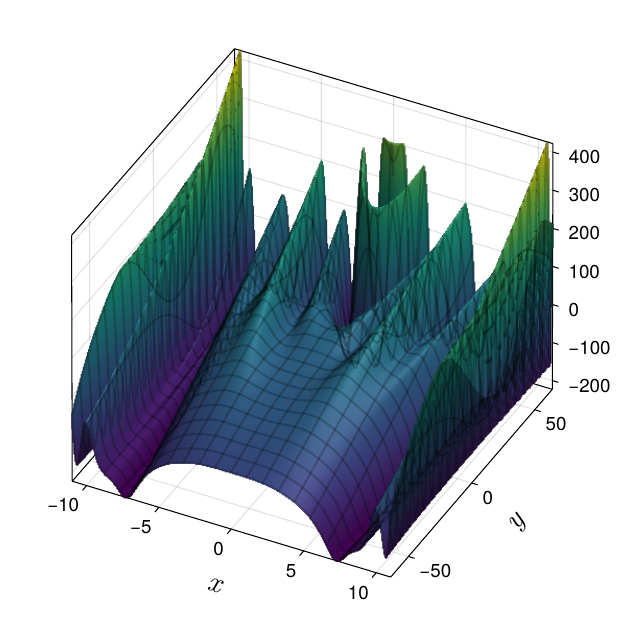}\\
(c) $g_2(x, y)$
& (d) $f_2(x, y)$
\end{tabular}
\caption{The constituent maps $(g_1,f_1),(g_2,f_2)$ of the RG two-cycle of $R$, computed using $g_k(x,y)=G_k^0(x^2,y)$ and $f_k(x,y)=F_k^0(x^2,y)$ for $(x^2,y)\in\Omega_k$ as a base case, together with recurrence relations derived from the functional equations~(\ref{eqn:fnalt1})--(\ref{eqn:fnalt4}) for a two-cycle of $T$. (Note the reversal in the $y$-axis direction, chosen for clarity, in (a),(b).)}
\label{fig:goodapproxgfrecur}
\end{figure}

\subsection{Approximate Newton Method}\label{sec:approxcycle1}

In this section, we show how we obtain good approximations to the maps $(G_k,F_k)$ of the C-type two-cycle by fixing a homogeneous degree $N$ and computing  polynomial approximations via a (truncated) Newton method, after which suitable domains $\Omega_k$ are chosen such that the operators $T_k$ are well-defined and differentiable (with compact derivatives) on the corresponding spaces $\mathscr{A}(\Omega_k)$. The approximations of $(G_k,F_k)$ are then expanded with respect to these new domains and the Newton method is repeated, working in the polynomial part of the space $\mathscr{A}(\Omega_1)^2\oplus\mathscr{A}(\Omega_2)^2$.

We wish to implement the Newton operator
\begin{equation}
V \mapsto V - [D\mathcal{F}(V)]^{-1}\mathcal{F}(V).
\end{equation}
Let
\begin{equation}
\varphi_{1,\Omega}:\mathcal{P}\mathscr{A}(\Omega)
\to\mathbb{R}^{N_1},
\end{equation}
represent the coefficient isomorphism between the polynomial part of the space $\mathscr{A}(\Omega)$ and the corresponding truncation of the sequence space $\ell_1$ for one map. 
Similarly, let
\begin{equation}
\varphi_{2,\Omega}:\mathcal{P}\left(\mathscr{A}(\Omega)^2\right)
\to\mathbb{R}^{N_2},
\end{equation}
and
\begin{equation}
\varphi_{4}:\mathcal{P}\left(\mathscr{A}(\Omega_1)^2\oplus \mathscr{A}(\Omega_2)^2\right)
\to\mathbb{R}^{N_4},
\end{equation}
represent the coefficient isomorphisms for a pair of maps and a pair of pairs of maps, respectively.
We can then approximate $D\mathcal{F}(V)$ via the matrix elements
\begin{equation}
M_{jk,mn}:=\mathcal{P}[D\mathcal{F}(e_{mn})]_{jk},
\end{equation}
in which $\mathcal{P}$ denotes the projection to the polynomial part of the space,
and the approximate Newton method can therefore be implemented via
\begin{equation}
V
\mapsto V - \left(\varphi_{4}^{-1}M^{-1}\varphi_{4}\right)(\mathcal{F}(V)).
\end{equation}
Note that, for the proof described in this paper, we took $0\le j+k\le N=24$, thus $N_1=(N+1)(N+2)/2=325$, $N_2=2N_1=650$, $N_4=2N_2=1300$, and the  derivatives of the operators $T_1,T_2$ on the polynomial parts of the spaces are represented by $N_2^2=422,500$ matrix elements, with $N_4^2=1,690,000$ matrix elements for the polynomial part of $D\mathcal{F}(V)$. 
A starting point for polynomial approximation was taken from~\cite{kuznetsov_perioddoubling_1997} (Table~2, p.255).

\subsection{Finding a good approximate two-cycle}

In order to move from polynomials to analytic maps, in Section~\ref{sec:de} and Section~\ref{sec:domaindetails} (motivated by Section~\ref{sec:rigorousframework}) we address the issue of choosing good domains $\Omega_k$ on which to define the relevant function spaces.
Once good domains have been chosen for our functions, we expand the approximate two-cycle polynomials with respect to these new domains and then re-apply the numerical Newton method (working in $\mathscr{A}(\Omega)$) to improve them, resulting in a \emph{good approximate two-cycle},
\begin{equation}
V^0=(P_1^0,P_2^0)=((G_1^0,F_1^0),(G_2^0,F_2^0)).
\end{equation}
In order to expand the approximations with respect to the new domains, recall the representation for analytic maps chosen in Section~\ref{sec:fnspaces} and note that if $G=\widehat{G}\circ\psi$ then $\widehat{G}=G\circ\psi^{-1}$ where
\begin{equation}
\psi^{-1}:(U,V)
\mapsto(X,Y)
:=(c+rU, d+sV).
\end{equation}
Firstly, we note that
\begin{equation}
\widehat{G} = (\widehat{G}\circ\psi^{-1})\circ\psi
\end{equation}
i.e., we compose existing polynomial approximations (e.g., $\widehat{G}$) with the maps from the unit disc to our new domains.
In this representation, we have, for $\mathrm{id}=\mathrm{id}_{\mathbb{C}^2}$,
\begin{align}
(z, w)= \mathrm{id}(z, w)=(\psi^{-1}\circ\psi)(z, w),
\end{align}
thus $\mathrm{id}=\widehat{\mathrm{id}}\circ\psi$ with $\widehat{\mathrm{id}}=\psi^{-1}$ which means that the representations (and norms) of the identity map, as an element of the spaces $\mathscr{A}(\Omega_k)$, and hence the individual coordinate projections onto $z$ and $w$, depend on the choice of domain $\Omega$.

The constituent maps $(g_1,f_1),(g_2,f_2)$ of the resulting approximate two-cycle for $R$ are visualised in Figure~\ref{fig:goodapproxsmall} for comparison with~\cite{kuznetsov_birth_2008} and (over a much larger domain) in Figure~\ref{fig:goodapproxgfrecur} using $g_k(x,y)=G_k(x^2,y)$ and $f_k(x,y)=F_k(x^2,y)$.
To evaluate the corresponding maps $(G_1,F_1),(G_2,F_2)$ of the resulting approximate two-cycle for $T$, over much larger domains than $\Omega_1,\Omega_2$, we use the functional equations~(\ref{eqn:fnalt1})--(\ref{eqn:fnalt4}) to provide recurrence relations, using the maps $G_k^0,F_k^0$ as the base case when $(X,Y)\in\Omega_k$. (The corresponding recursions terminate due to the \emph{real domain extension} conditions that will be verified in Section~\ref{sec:de} and illustrated in Figure~\ref{fig:de_real}.) 

\section{Domain extension}\label{sec:de}

The domains $\Omega_k$ must be chosen carefully to ensure that the operators $T_1$ and $T_2$ are well defined on the spaces $\mathscr{A}(\Omega_1)$ and $\mathscr{A}(\Omega_2)$, respectively.
We have the following functional equations~(\ref{eqn:fnalt1})--(\ref{eqn:fnalt4}), reproduced here for convenience,
\begin{align}
    a_1^{-1}G_1\big(G_1(a_1^2X, b_1y)^2,F_1(a_1^2X, b_1y)\big)
    &=G_2(X,y)\quad\mbox{on $\Omega_2$},\\
    b_1^{-1}F_1\big(G_1(a_1^2X, b_1y)^2,F_1(a_1^2X, b_1y)\big)
    &=F_2(X,y)\quad\mbox{on $\Omega_2$},\\
    a_2^{-1}G_2\big(G_2(a_2^2X, b_2y)^2,F_2(a_2^2X, b_2y)\big)
    &=G_1(X,y)\quad\mbox{on $\Omega_1$}\\
    b_2^{-1}F_2\big(G_2(a_2^2X, b_2y)^2,F_2(a_2^2X, b_2y)\big)
    &=F_1(X,y)\quad\mbox{on $\Omega_1$}.
\end{align}
We therefore wish to find domains $\Omega_1,\Omega_2$ for which the following inclusions (`domain extension conditions') hold:
\begin{align}
\overline{\mbox{diag}(a_1^2, b_1)\,\Omega_2}
&\subset\Omega_1,\label{eqn:tde1}\\
\overline{(G_1^2,F_1)\,\mbox{diag}(a_1^2, b_1)\,\Omega_2}
&\subset\Omega_1,\label{eqn:tde2}\\
\overline{\mbox{diag}(a_2^2, b_2)\,\Omega_1}
&\subset\Omega_2,\label{eqn:tde3}\\
\overline{(G_2^2,F_2)\,\mbox{diag}(a_2^2, b_2)\,\Omega_1}
&\subset\Omega_2.\label{eqn:tde4}
\end{align} 
Later, in Section~\ref{sec:domaindetails}, we will impose additional constraints on the choice of domains, in order that the results of compositions of maps remain with the restricted classes $\mathscr{A}(\Omega_k)$ and that suitable error bounds can be maintained during compositions of functions in the computer-assisted part of the proof (Sections~\ref{sec:domaincentres}--\ref{sec:domainradii}). Moreover, by insisting that the closures of the left-hand sides in~(\ref{eqn:tde1})--(\ref{eqn:tde4}) are contained strictly within the relevant domains, we ensure that the maps have analytic extensions to strictly larger domains and that the derivatives $DT_1(P_1)$, $DT_2(P_2)$, and $D(T^2)(P_k)$, are compact.

\begin{figure}
\centering
\begin{tabular}{c}
\includegraphics[width=0.9\textwidth]{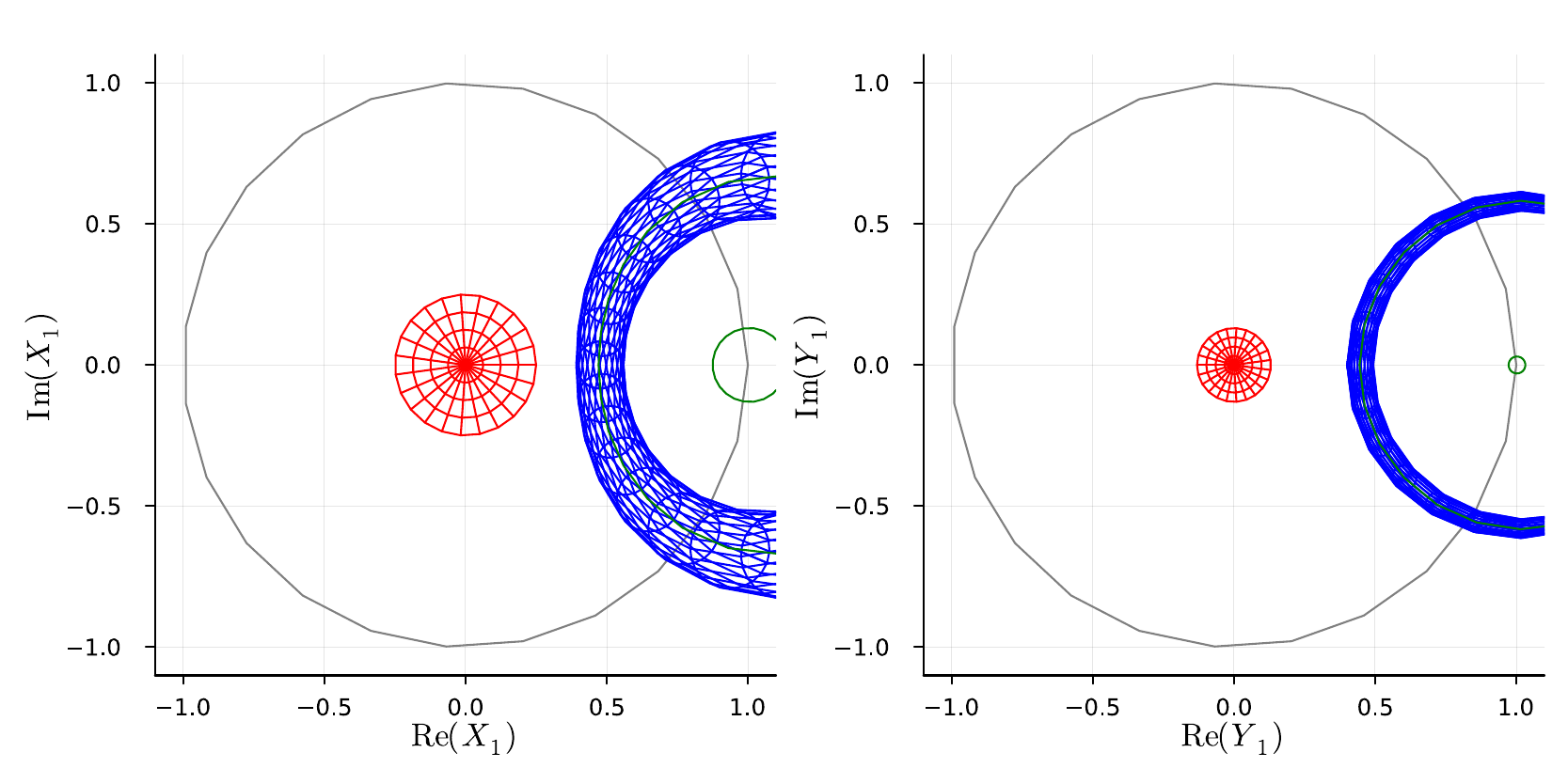}\\
(a) $\overline{\mathrm{diag}(a_1^2,b_1)\flat\Omega_0}\subset\Omega_0$ (red), $\overline{(G_1^2,F_1)\mathrm{diag}(a_1^2,b_1)\flat\Omega_0}\not\subset\Omega_0$ (blue).\\
\includegraphics[width=0.9\textwidth]{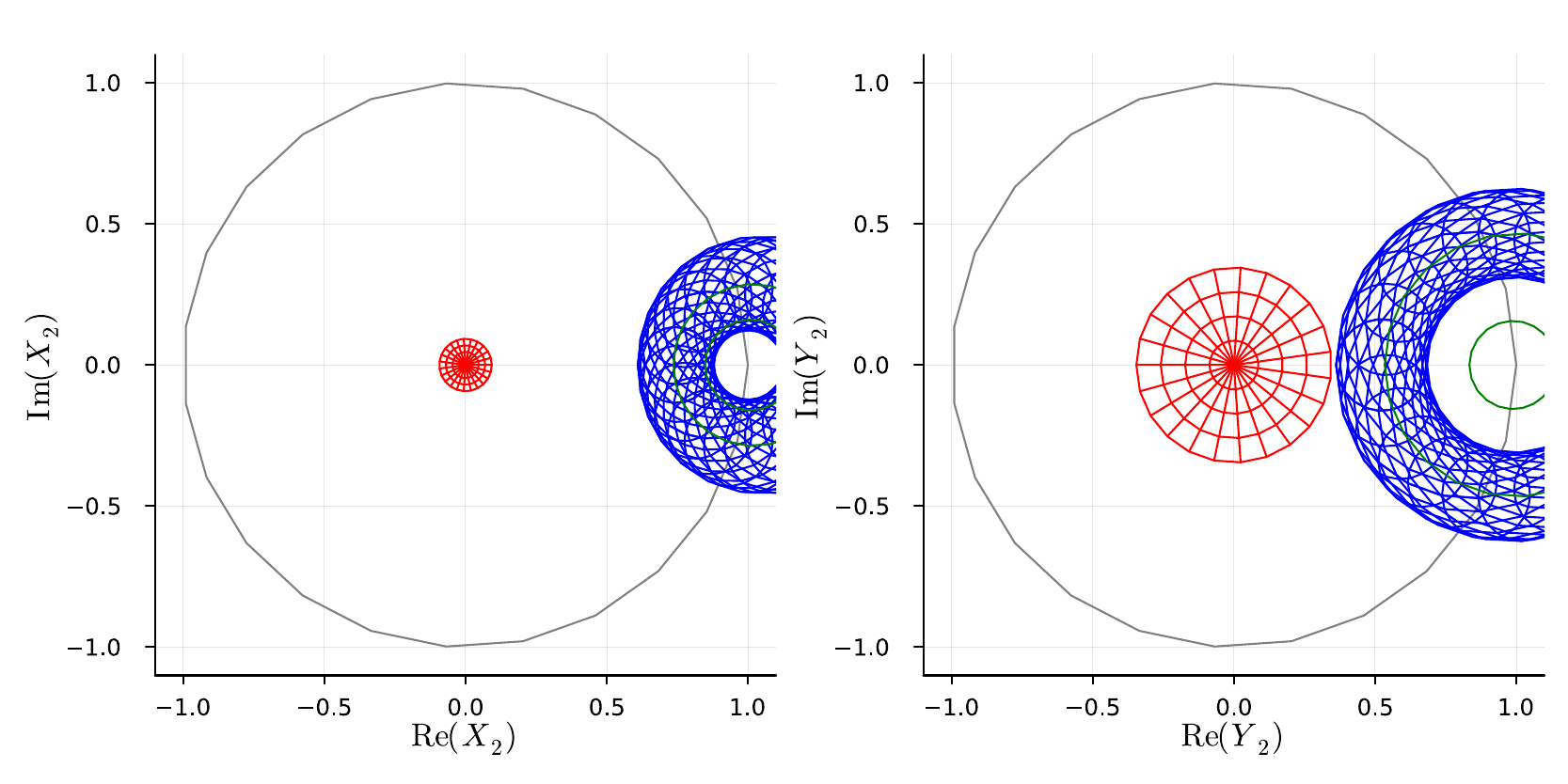}\\
(b) $\overline{\mathrm{diag}(a_2^2,b_2)\flat\Omega_0}\subset\Omega_0$ (red), $\overline{(G_2^2,F_2)\mathrm{diag}(a_2^2,b_2)\flat\Omega_0}\not\subset\Omega_0$ (blue).
\end{tabular}
\caption{Failure of complex domain extension for unit polydisc domains. (a) top:
$(X_1,Y_1)\in\Omega_0:=D(0,1)\!\times\! D(0,1)\subset\mathbb{C}^2$ and
(b) bottom:
$(X_2,Y_2)\in\Omega_0$ (with boundaries shown as outer circles).
Shown are the images of the distinguished boundary $\flat\Omega_0$ under the relevant maps in~(\ref{eqn:tde1})--(\ref{eqn:tde4}). The images $\overline{\mathrm{diag}(a_k^2,b_k)\flat\Omega_0}$ of the distinguished boundaries ($2$-tori) under rescaling are shown in red in colour version, with images $\overline{(G_k^2,F_k)\mathrm{diag}(a_k^2,b_k)\flat\Omega_0}$ of the distinguished boundaries shown in blue in colour version, and images of the degenerate tori (the circles $\partial D(0,1)\times\{0\}$ and $\partial \{0\}\times \partial D(0,1)$) are shown in green.
}
\label{fig:defail}
\end{figure}

\subsection{Numerical complex domain extension}

If the domain extension conditions hold for complex domains, for rescaling and composition operators such as $T$ and $R$, then it follows that the Fr\'echet derivative is compact for the operator acting on the corresponding Banach space of analytic functions. (This is discussed in more detail in~\cite{mestel_computer_1985,mackay_renormalisation_1993,burbanks_rigorous_2021a}.)

Here, we provide a parameterisation of the (closed) polydisc $\overline{\Omega}$ where $\Omega=D(c,r)\times D(d,s)$ that will be useful in illustrating domain extension:
\begin{equation}
[0,1]^2\times[0,2\pi)^2
\ni(\rho_1, \rho_2; \theta_1, \theta_2)
\mapsto (c+\rho_1 r e^{i\theta_1}, d+\rho_2 s e^{i\theta_2})
\in\mathbb{C}^2.
\end{equation}
We may write the boundary $\partial\Omega$ as a one-parameter family of $2$-tori, $p\mapsto\mathbb{T}^2_p$ given by
\begin{align}
\Pi&:
[-1,+1]\times[0,2\pi)^2\to\partial\Omega\subset\mathbb{C}^2,\\
\Pi&:
(p, \theta_1, \theta_2)
\mapsto (c+\min(1, 1-p)re^{i\theta_1}, d+\min(1, 1+p)se^{i\theta_2}).
\end{align}
The boundary $\partial\Omega$ consists of the union of two filled $2$-tori,
\begin{equation}
\partial\Omega
=(\overline{D(c,r)}\times\partial{D(d,s)})
\cup
(\partial{D(c,r)}\times\overline{D(d,s)}),
\end{equation}
(corresponding to $p\in[-1,0]$ and $p\in[0,1]$ respectively) foliated by a one-parameter family of $2$-tori, that intersect along their common boundary torus, the \emph{distinguished boundary} $\flat\Omega\subset\partial\Omega$  ($p=0$) at which $\partial\Omega$ is non-smooth:
\begin{equation}
\flat\Omega
=\mathbb{T}^2_{0}
=\Pi\bigl(0, [0,2\pi), [0,2\pi)\bigr)
=\partial D(c,r)\times\partial D(d,s)\cong\mathbb{T}^2.
\end{equation}
The parameterisation has singular points at $p=\pm 1$ corresponding to the circles: 
\begin{align}
\mathbb{T}^2_{-1}
=\Pi\bigl(-1, [0,2\pi), [0,2\pi)\bigr)
&=\partial D(c,r)\times\{d\}\cong S^1,\\
\mathbb{T}^2_{+1}
=\Pi\bigl(+1, [0,2\pi), [0,2\pi)\bigr)
&=\{c\}\times \partial D(d,s)\cong S^1.
\end{align}
We visualise images of the bidisc under a map by sampling the parameter $p$ and plotting a discretisation of a number of the resulting $2$-tori by sampling values $(\theta_1,\theta_2)$ for the angular variables. For clarity, we include the degenerate tori and the distinguished boundary.
Figure~\ref{fig:defail} shows failure of numerical complex domain extension for the unit polydisc domain $\Omega_0=D(0,1)\times D(0,1)$.
The restriction to the corresponding real domain $\Xi_0$ is shown in Figure~\ref{fig:de_real}(a).
Later, in Figure~\ref{fig:de} of Section~\ref{sec:domaindetails}, we show domains for which complex domain extension holds (with the restriction to the corresponding real domains $\Xi_1,\Xi_2$ shown in Figure~\ref{fig:de_real}(b) for comparison).
We note that the maximum principle for several complex variables implies that it is enough to verify that the images of the distinguished boundary lie within the corresponding polydiscs~\cite{rudin_function_1969}.
The computer-assisted portion of the proof verifies domain extension by computing a rigorous enclosure of the image of the polydisc under the various maps.

\section{Rigorous computations}\label{sec:rigorousframework}

In order to bound the results of computations on the spaces $\mathscr{A}(\Omega_k)$, we adapt the frameworks described by Eckmann, Koch, and Wittwer~\cite{eckmann_computer-assisted_1984,eckmann_computer_1985} for analytic maps of one and two variables. In short, we fix a (homogenous) truncation degree $N$ and partition spaces into the corresponding polynomial part and high-order part. Bounds are maintained carefully on the power series coefficients of the polynomial parts and on the norms of the high-order parts.
(Such methods have their roots in Moore's interval arithmetic~\cite{moore_interval_1966}, described more recently in greater abstraction by Kaucher et al~\cite{kaucher_self-validating_1984,kulisch_arithmetic_1986}.) A central concept is the \emph{function ball} defined by bounds, comprising a vector $v_P=([a_{jk},b_{jk}])_{0\le j+k\le N}$ of (proper) closed intervals for polynomial coefficients and two upper bounds, $v_H,v_E\ge 0$, on norms: one for the high-order part, and another for the general or `error' part. Incorporating the error part enables us to describe a ball of functions in the usual sense, and provides greater flexibility in absorbing error bounds accumulated in computations.
\begin{defn}[Function ball]\label{defn:fnball}
Define the (convex closed) function ball \(\mathscr{B}({v_P},{v_H}, {v_E})\subset \mathscr{A}(\Omega)\) by
\begin{align*}
\mathscr{B}({v_P}, {v_H}, {v_E}) :=
\bigl\{
&\ F\in \mathscr{A}(\Omega):\\
&\ F = {F_P}+{F_H}+{F_E},\\
&\ {F_P\in \mathcal{P}\mathscr{A}(\Omega)},\ {F_H\in \mathcal{H}\mathscr{A}(\Omega)},
\ {F_E\in \mathscr{A}(\Omega)},\\
&\ {F_P(X,Y) = \sum_{\substack{0\le j,k\\ 0\le j+k\le N}}F_{jk}e_{jk}(X,Y),\ F_{jk}\in[a_{jk},b_{jk}]},\\
&\ {\|F_H\|\le v_H},
\ {\|F_E\|\le v_E}
\bigr\}.
\end{align*}\end{defn}
In the above, $\mathcal{P}$ denotes the canonical projection to the polynomial part of the space:
for $(X,Y)\in\Omega$ and
\begin{equation}
F(X,Y) = \sum_{0\le j,k}F_{jk}e_{jk}(X,Y),
\end{equation}
we have
\begin{equation}
\mathcal{P}F(X,Y) = \sum_{\substack{0\le j,k\\ j+k\le N}}F_{jk}e_{jk}(X,Y),
\end{equation}
and $\mathcal{H}:=I-\mathcal{P}$ denotes the canonical projection to the high-order part of the space. In the implementation, we use computer-representable numbers for the bounds $(v_P,{v_H},{v_E})$ and interval arithmetic with the relevant directed-rounding modes.

Given $H,G,F\in\mathscr{A}(\Omega)$, and scalar $a\in\mathbb{C}$, detailed bounds for the resulting \emph{function ball algebra} (specifying function balls guaranteed to give rigorous enclosures of the results) are formulated in~\cite{eckmann_computer-assisted_1984,eckmann_computer_1985} for maps of one and two variables, for the operations that result in $F+G$, $aF$, $F\cdot G$, $H\circ (G,F)$, $\|F\|$, and evaluation $F(X,Y)$. We note that although the derivative operator that results in $\partial_jF$ is unbounded, it is possible to bound expressions of the form $\partial_jH\circ (G,F)$, including those appearing in the Fr\'echet derivatives of $T_k$~(\ref{eqn:frechett1})--(\ref{eqn:frechettn}) in Section~\ref{sec:rgops}, given suitable assumptions on $G,F$. 

Bounds for function balls in $\mathscr{A}(\Omega)$, for a general polydisc $\Omega$, are readily computed in terms of those for `standard' function balls in $\mathscr{A}(\Omega_0)$ by using the relations~(\ref{eqn:banachiso1})--(\ref{eqn:banachiso3}).
With a slight abuse of notation, where the polynomial part $F_P$ is known (and thus the intervals $[a_{jk},b_{jk}]$ of $v_P$ are degenerate), we use the notation $\mathscr{B}(F_P;v_H,v_E)$, to denote the ball of functions centered on that particular polynomial.

In order to motivate our discussion of domains in Section~\ref{sec:domaindetails}, we must outline briefly the method used to bound compositions of maps in this framework.

\subsection{Bounds for composition}

In this section, we examine the computational framework (adapted from~\cite{eckmann_computer_1985,eckmann_computer-assisted_1984}) used to bound compositions of the form
\begin{equation}
(h, g, f) \mapsto h\circ(g, f),    
\end{equation}
and partial (coordinate) derivatives followed by composition $(h, g, f) \mapsto (\partial_jh)\circ(g, f)$,
such expressions appearing in the definition of the operators $T_k$ and in the formal Fr\'echet derivative $DT_k(P_k)$ taken at a pair $P_k=(G_k,F_k)$ given in~(\ref{eqn:frechett1})--(\ref{eqn:frechettn}) and hence in the derivatives $D\mathcal{T}(V)$ and $D\mathcal{F}(V)$~(\ref{eqn:frechetcurlyt}) for $V=(P_1,P_2)$.

We will work in $\mathscr{A}(\Omega_0$) (the generalisation to $g,f\in\mathscr{A}(\Omega_k)$, $g,f:\Omega_k\to\Omega_j$ and $h\in\mathscr{A}(\Omega_j)$ is straightforward).
Let $h\in\mathscr{B}(w_P,w_H,w_E)$, $g\in\mathscr{B}(v_P,v_H,v_E)$, and $f\in\mathscr{B}(u_P,u_H,u_E)$. Thus we can write $h=h_P+h_H+h_E$, $g=g_P+g_H+g_E$, and $f=f_P+f_H+f_E$ for suitably-bounded functions $h_P,g_P,f_P\in\mathcal{P}\mathscr{A}(\Omega_0)$, $h_H,g_H,f_H\in\mathcal{H}\mathscr{A}(\Omega_0)$, and $h_E,g_E,f_E\in\mathscr{A}(\Omega_0)$.
Firstly, we make use of linearity of composition in the leftmost argument:
\begin{equation}
(h_P+h_H+h_E)\circ(g,f)
=h_P\circ(g,f)
+h_H\circ(g,f)
+h_E\circ(g,f),\label{eqn:compositionterms}
\end{equation}
and note that the first term $h_P\circ(g,f)$ in~(\ref{eqn:compositionterms}) is a polynomial in $g,f$ that may be bounded by implementing operations to compute rigorous enclosures of the corresponding elementary Banach algebra operations $(a,G)\mapsto aG$, $(G, F)\mapsto G+F$, and $(G,F)\mapsto G\cdot F$. For $\|g\|_1,\|f\|_1<1$, which may be verified by computing rigorous enclosure of the norm, $G\mapsto\|G\|_1$, bounds on the third term in~(\ref{eqn:compositionterms}) follow from
\begin{equation}
\|h_E\circ(g, f)\|_1\le w_E\cdot\max(\|g\|_1,\|f\|_1).
\end{equation}
For the second term in~(\ref{eqn:compositionterms}), note that composition of a high-order function with a pair of functions can contribute to both polynomial and high-order terms, but that the result will be strictly high-order if $g,f$ have zero constant terms.
To take this into account, we will decompose $h_H \circ (g,f)$ into 2 parts, one will contribute to the high order part of the result and the other to the error part (in general, the latter may contain polynomial terms, but we lack information about individual coefficients).
Write $g=g_{00}+g_*$ and $f=f_{00}+f_*$ where $g_{00}$, $f_{00}$ are the constant terms of $g$, $f$:
\begin{equation}
    h_H \circ (g,f)
    = \sum_{j+k\ge N+1} h_{jk} \big( g_{00} + g_* \big)^j \big( f_{00} + f_* \big)^k.
\end{equation}
Terms that involve $g_*,f_*$ only (and are therefore strictly high-order) are bounded by
\begin{equation}
\left\Vert \cdot \right\Vert_1 \leq w_H\cdot \max\left( \left\Vert g_* \right\Vert_1, \left\Vert f_* \right\Vert_1 \right)^{N+1},
\end{equation}
which we absorb into the high-order bound on $h_H \circ (g,f)$.
For the other terms, by properties of the $\ell_1$-norm and disjointness of partitions $g=g_{00}+g_*$ and $f=f_{00}+f_*$,
\begin{equation}
\left\Vert g^j  f^{k} - g_*^j f_*^{k}   \right\Vert_1
\leq \left\Vert g \right\Vert_1 ^j \left\Vert f \right\Vert_1 ^{k} - \left\Vert g_* \right\Vert_1^j \left\Vert f_* \right\Vert_1^{k},
\end{equation}
for $j+k=N+1$.
Summing over the relevant terms in the series  contributes
\begin{equation}
\left\Vert \cdot \right\Vert_1 \leq w_H\cdot \max_{j+k=N+1}\left( \left\Vert g \right\Vert_1^j \left\Vert f \right\Vert_1^k - \left\Vert g_* \right\Vert_1^j \left\Vert f_* \right\Vert_1^k \right),
\end{equation}
to the error term of the result.
The size of the bracketed expression is controlled by the constant terms on $g$, $f$.
For the current paper, this demands a careful choice of distinct domains $\Omega_1,\Omega_2$ for the pairs $(G_k,F_k)$ rather than a single domain.
In Section~\ref{sec:domaindetails} we will show that a good choice for the centres of the domains is to take particular roots of the `outer' maps which are exchanged by the `inner' maps in the functional equations for the two-cycle~(\ref{eqn:fnalt1})--(\ref{eqn:fnalt4}). (That such points exist follows from the functional equations, together with the domain extension conditions.)
These points minimise the constant terms indicated above, and allow radii to be chosen so that appropriate conditions hold on the norms involved in composition.

\section{Finding good domains}\label{sec:domaindetails}

Recall that we imposed `domain extension' conditions in Section~\ref{sec:de} to ensure that the operators $T_k$ are well-defined on $\mathscr{A}(\Omega_k)^2$; we now impose additional constraints on the domains.
The choices of centres (and radii) determine the structures of the norms on the resulting spaces $\mathscr{A}(\Omega_k)$. Good choices mean that the power series coefficients for our functions decay more quickly, allowing the proof to proceed with a lower truncation degree, and that the error terms involved in bounding a composition $H\circ(G,F)$ remain manageable.
For the latter, we need the constant terms on functions $\widehat{G},\widehat{F}$ to be as small as possible (recall that we write $G=\widehat{G}\circ\psi$ and $F=\widehat{F}\circ\psi$, where $\psi$ is the map $\psi:\Omega\to\Omega_0$ for the relevant domain $\Omega$).

\subsection{Domain Centres for C-type}\label{sec:domaincentres}

In what follows, we first seek suitable locations for the centres $c_k,d_k$ of the bidiscs $\Omega_k=D(c_k,r_k)\times D(d_k,s_k)$.
Consider the functional equations~(\ref{eqn:fnalt1})--(\ref{eqn:fnalt4}), we claim that suitable centres $(c_k,d_k)$ correspond to pairs of points in $\mathbb{C}^2$ that are exchanged by the inner maps and that therefore form roots of the outer maps (we prove this as Proposition~\ref{prop:innerouter}).
Firstly, we note that the dominant contribution to the error bounds during the operations of composition and derivative followed by composition, of general functions $F,G,H$, are due to the presence of a nonzero constant term on the functions $G,F$ in expressions of the form
$H\circ(G\oplus F)$ and $\partial_kH\circ(G\oplus F)$.

The functional equations~(\ref{eqn:fnalt1})--(\ref{eqn:fnalt4}) for the RG two-cycle impose a symmetry on the functions $(G_1,F_1),(G_2,F_2)$ that constrains certain properties of `inner' maps involved in the compositions. For the C-type system, we have the following claim.
\begin{prop}\label{prop:innerouter}
Suppose that (a) the pairs $(G_1,F_1)$, $(G_2,F_2)$ form a two-cycle of $T$, and (b) the pairs $(c_1,d_1)$, $(c_2,d_2)$ are exchanged by the `inner' maps $\Theta_1$ and $\Theta_2$:
\begin{align}
    (c_1,d_1)&=\Theta_1(c_2,d_2):=(G_1(a_1^2 c_2,\,b_1 d_2)^2,\,F_1(a_1^2 c_2,\,b_1 d_2)),\\
    (c_2,d_2)&=\Theta_2(c_1,d_1):=(G_2(a_2^2 c_1,\,b_2 d_1)^2,\,F_2(a_2^2 c_1,\,b_2 d_1)),
\end{align}
and (c) $a_1a_2\neq 0,1$, $b_1b_2\neq 0,1$,
then
\begin{align}
    (G_1,F_1)(c_1,d_1)&=(0,0),\\
    (G_2,F_2)(c_2,d_2)&=(0,0),
\end{align}
\end{prop}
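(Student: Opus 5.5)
The approach is to evaluate the functional equations~(\ref{eqn:fnalt1})--(\ref{eqn:fnalt4}) at the exchanged points $(c_1,d_1)$ and $(c_2,d_2)$. Hypothesis (b) then collapses each composition into a value of the other map at the other point. This gives a linear relation between $G_1(c_1,d_1)$ and $G_2(c_2,d_2)$, and a separate one between $F_1(c_1,d_1)$ and $F_2(c_2,d_2)$. Hypothesis (c) forces both relations to have only the trivial solution.

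\emph{Step 1.} Take $(X,Y)=(c_1,d_1)$ in~(\ref{eqn:fnalt3}). The inner argument is $\bigl(G_2(a_2^2c_1,b_2d_1)^2,\,F_2(a_2^2c_1,b_2d_1)\bigr)=\Theta_2(c_1,d_1)=(c_2,d_2)$. Hence
\begin{equation*}
G_1(c_1,d_1)=a_2^{-1}G_2(c_2,d_2),
\end{equation*}
and in the same way~(\ref{eqn:fnalt4}) gives $F_1(c_1,d_1)=b_2^{-1}F_2(c_2,d_2)$. Symmetrically, taking $(X,Y)=(c_2,d_2)$ in~(\ref{eqn:fnalt1})--(\ref{eqn:fnalt2}) and using $\Theta_1(c_2,d_2)=(c_1,d_1)$ gives
\begin{equation*}
G_2(c_2,d_2)=a_1^{-1}G_1(c_1,d_1),\qquad F_2(c_2,d_2)=b_1^{-1}F_1(c_1,d_1).
\end{equation*}
The factors $a_k^{-1}$ and $b_k^{-1}$ make sense because $a_1a_2\neq0$ and $b_1b_2\neq0$ imply that every $a_k$ and $b_k$ is nonzero.

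\emph{Step 2.} Substituting one relation into the other gives $G_1(c_1,d_1)=(a_1a_2)^{-1}G_1(c_1,d_1)$, that is, $\bigl(1-(a_1a_2)^{-1}\bigr)G_1(c_1,d_1)=0$. Since $a_1a_2\neq1$, this forces $G_1(c_1,d_1)=0$, and then Step 1 gives $G_2(c_2,d_2)=a_1^{-1}\cdot 0=0$. The identical argument with $b_1b_2\neq1$ yields $F_1(c_1,d_1)=0$ and $F_2(c_2,d_2)=0$.

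\emph{Main obstacle.} The algebra is trivial; the only delicate point is justifying that the functional equations may be evaluated at these points. We need $(c_k,d_k)\in\Omega_k$, which holds since they are the centres of the bidiscs. We also need the rescaled points $(a_2^2c_1,b_2d_1)$ and $(a_1^2c_2,b_1d_2)$ to lie in $\Omega_2$ and $\Omega_1$ respectively, so that the inner maps are defined there. This follows from the domain extension conditions~(\ref{eqn:tde1})--(\ref{eqn:tde4}), applied with the roles of the domains as in the equations. The points $\Theta_k(\cdot)$ then land in the correct domains automatically by (b). If one prefers not to presuppose the domains, the same computation works wherever the two-cycle equations hold identically, for instance on any common domain of analytic continuation containing the four relevant points.
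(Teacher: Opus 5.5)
Your proof is correct and matches the paper's argument: evaluate the two-cycle equations at the exchanged points, use (b) to reduce to $G_1(c_1,d_1)=(a_1a_2)^{-1}G_1(c_1,d_1)$ (and likewise $F_1(c_1,d_1)=(b_1b_2)^{-1}F_1(c_1,d_1)$), and conclude from (c). Your remark that the domain extension conditions make these evaluations legitimate is a point the paper leaves implicit.
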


\begin{proof}
Assume (a) and (b). Consider
\begin{align}
\left(\begin{array}{r}
G_2(c_2,d_2)\\
F_2(c_2,d_2)
\end{array}\right) &=
    T_1(G_1, F_1)(c_2,d_2)\\
    &=
\left(\begin{array}{r}
a_1^{-1}G_1\left(G_1(a_1^2 c_2,\, b_1 d_2)^2,\, F_1(a_1^2 c_2,\, b_1 d_2)\right)\\ 
b_1^{-1}F_1\left(G_1(a_1^2 c_2,\, b_1 d_2)^2,\, F_1(a_1^2 c_2,\, b_1 d_2)\right)
\end{array}\right)\\
& = \left(\begin{array}{r}
a_1^{-1}G_1(c_1, d_1)\\
b_1^{-1}F_1(c_1, d_1)
\end{array}\right).
\label{eqn:zeroroot1}
\end{align}
Similarly, we gain
\begin{align}
\left(\begin{array}{r}
G_1(c_1, d_1)\\
F_1(c_1, d_1)
\end{array}\right)
& = \left(\begin{array}{r}
a_2^{-1}G_2(c_2, d_2)\\
b_2^{-1}F_2(c_2, d_2)
\end{array}\right).
\label{eqn:zeroroot3}
\end{align}
From~(\ref{eqn:zeroroot1}) and~(\ref{eqn:zeroroot3}), we get
\begin{equation}
G_1(c_1, d_1)
= a_2^{-1} G_2(c_2, d_2)
= a_2^{-1} a_1^{-1} G_1(c_1, d_1),
\end{equation}
which implies $G_1(c_1, d_1) = 0$.
Similarly, $G_2(c_2, d_2) = F_1(c_1, d_1) = F_2(c_2, d_2) = 0$.
\end{proof}
We choose $(c_{1}, d_{1})\in\mathbb{C}^2$, $(c_{2}, d_{2})\in\mathbb{C}^2$ as above, and recall that $\psi_k:\Omega_k\to\Omega_0$,
\begin{align}
\psi_k:(X, Y) &\mapsto\left( \dfrac{X - c_{k}}{r_{k}}, \dfrac{Y - d_{k}}{s_{k}} \right).
\end{align}
This means, for example, that for $G_1 = \widehat{G}_1 \circ \psi_1$, we have
\begin{align}
0=G_1(c_1, d_1)
&= \widehat{G}_1(\psi_1 (c_1, d_1))
= \widehat{G}_1 (0, 0)
= \widehat{G}_{1,00},
\end{align}
where the latter is the constant term of $\widehat{G}_1$.
For a good approximate two-cycle $(G_1,F_1),(G_2,F_2)$ we therefore expect the corresponding constant terms to be small.
Optimising the locations $(c_k,d_k)$ numerically, for a good approximate two-cycle chosen with truncation degree $N=24$ yielded (displayed to $5$dp):
\begin{align}
    (c_{1}, d_{1})
    &\simeq (0.64623, 0.55421), \\
    (c_{2}, d_{2})
    &\simeq (0.74835, 0.63468).
\end{align}
Taking these centres as fixed, we now turn our attention to the radii $(r_1,s_1),(r_2,s_2)$.

\subsection{Domain radii}\label{sec:domainradii}

Consider a general composition of functions $H\in\mathscr{A}(\Omega_j)$ and $G,F\in\mathscr{A}(\Omega_k)$, under the assumption that $G,F:\Omega_k\to\Omega_j$. Using our chosen representation for functions analytic on polydiscs, note that $H\circ(G,F)$ is represented by
\begin{equation}
\widehat{H}\circ\left[\psi_j\circ(\widehat{G},\widehat{F})\right]\circ\psi_k=:\widehat{K}\circ\psi_k=:K,
\label{eqn:hatcompose}
\end{equation}
where $\widehat{H},\widehat{G},\widehat{F}\in\mathscr{A}(\Omega_0)$. Recall the discussion in Section~\ref{sec:rigorousframework}: in order that the leftmost composition in~(\ref{eqn:hatcompose}) (resulting in $\widehat{K}$) also lies in $\mathscr{A}(\Omega_0)$, so that $K\in\mathscr{A}(\Omega_k)$, we require that the norms of the function pair represented by the square-bracketed expression are less than one (and, ideally, as small as possible).

Such a condition must hold at every function composition in the functional equations for the two-cycle. Below, we will derive expressions for some of the corresponding norms.
Systematic experimentation for the radii led to the choices
\begin{align}
(r_1,s_1)&\simeq(1.6, 1.38),\label{eqn:chosenradii1}\\
(r_2,s_2)&\simeq(1.25, 1.95).\label{eqn:chosenradii2}
\end{align}

\subsubsection{Estimates for Inner Composition Norms}

For the expressions that follow, we emphasise the notational conventions:
\begin{align}
(G,F)(X,Y)&:=(G(X,Y),F(X,Y)),\\
\mathrm{diag}(a,b)(X,Y)&:=(aX,bY).
\end{align}
For inner composition norms (i.e., the norms associated with the inner scaling operations in the expression for the operators $T_k$) we have on $\Omega_1$:
\begin{align}
(G_2^2,F_2)\circ\mathrm{diag}(a_2^2,b_2)
&= (\widehat{G}_2^2,\widehat{F}_2)\circ\left[\psi_2\circ\mathrm{diag}(a_2^2,b_2)\circ\psi_1^{-1}\right]\circ\psi_1.
\label{eqn:scalecompose}
\end{align}
We note that the map in the square braces is given by
\begin{align}
(X,Y)
&\mapsto\left(
\frac{(a_2^2c_1-c_2)+(a_2^2r_1)X}{r_2},
\ \frac{(b_2d_1-d_2)+(b_2s_1)Y}{s_2}
\right).
\end{align}
Note that $(\widehat{G}_2^2,\widehat{F}_2)\in \mathscr{A}(\Omega_0)^2$. We require that the pair of maps denoted by the expression in square braces also lies in $\mathscr{A}(\Omega_0)^2$ so that the leftmost composition in the right-hand side of~(\ref{eqn:scalecompose}) is well-defined $\mathscr{A}(\Omega_0)^2$. Thus we require that
\begin{equation}
\mathrm{max}\left(
\frac{1}{r_2}|a_2^2c_1-c_2|+\frac{r_1}{r_2}|a_2|^2,
\ \frac{1}{s_2}|b_2d_1-d_2|+\frac{s_1}{s_2}|b_2|
\right)<1.
\end{equation}
For the radii chosen~(\ref{eqn:chosenradii1}),(\ref{eqn:chosenradii2}) the above norms are approximately $0.66951$ and $0.66837$, respectively.
Similarly, we have on $\Omega_2$:
\begin{align}
(G_1^2,F_1)\circ\mathrm{diag}(a_1^2,b_1)
&= (\widehat{G}_1^2,\widehat{F}_1)\circ\left[\psi_1\circ\mathrm{diag}(a_1^2,b_1)\circ\psi_2^{-1}\right]\circ\psi_2.
\end{align}
We therefore require the following condition on the $\ell_1$ norms
\begin{equation}
\mathrm{max}\left(
\frac{1}{r_1}|a_1^2c_2-c_1|+\frac{r_2}{r_1}a_1^2,
\ \frac{1}{s_1}|b_1d_2-d_1|+\frac{s_2}{s_1}|b_1|
\right)<1.
\end{equation}
The above norms are approximately $0.48225$ and $0.64653$, for the radii~(\ref{eqn:chosenradii1}),(\ref{eqn:chosenradii2}).

\subsubsection{Estimates for Outer Composition Norms}

For outer composition norms (those corresponding to the outermost composition of the functions $G_k,F_k$ in the expression for the operator $T_k$) we have on $\Omega_1$:
\begin{align}
&{}
(G_2,F_2)\circ(G_2^2,F_2)\circ\mathrm{diag}(a_2^2,b_2)\nonumber\\
&= 
(\widehat{G}_2,\widehat{F}_2)\circ\left[\psi_2
    \circ(G_2^2,F_2)
    \circ\mathrm{diag}(a_2^2,b_2)\circ\psi_1^{-1}\right]\circ\psi_1.
\end{align}
Similarly, on $\Omega_2$:
\begin{align}
&{}
(G_1,F_1)\circ(G_1^2,F_1)\circ\mathrm{diag}(a_1^2,b_1)\nonumber\\
&= 
(\widehat{G}_1,\widehat{F}_1)\circ\left[\psi_1
    \circ(G_1^2,F_1)
    \circ\mathrm{diag}(a_1^2,b_1)\circ\psi_2^{-1}\right]\circ\psi_2.
\end{align}
Numerical estimates for the norms of the pairs of maps in the square bracketed expressions above are given by $(0.56121, 0.49948)$ and $(0.72106, 0.62096)$, for the good approximate two-cycle maps on the chosen domains with radii~(\ref{eqn:chosenradii1}),(\ref{eqn:chosenradii2}).
With the choices for centres and radii detailed above, the domain extension conditions are also satisfied for both bidiscs $\Omega_1$ and $\Omega_2$. The computer-assisted portion of the proof verifies this via rigorous enclosure.

\subsection{Rigorous verification of domain extension}

\begin{figure}
\centering
\begin{tabular}{c}
\includegraphics[width=0.9\textwidth]{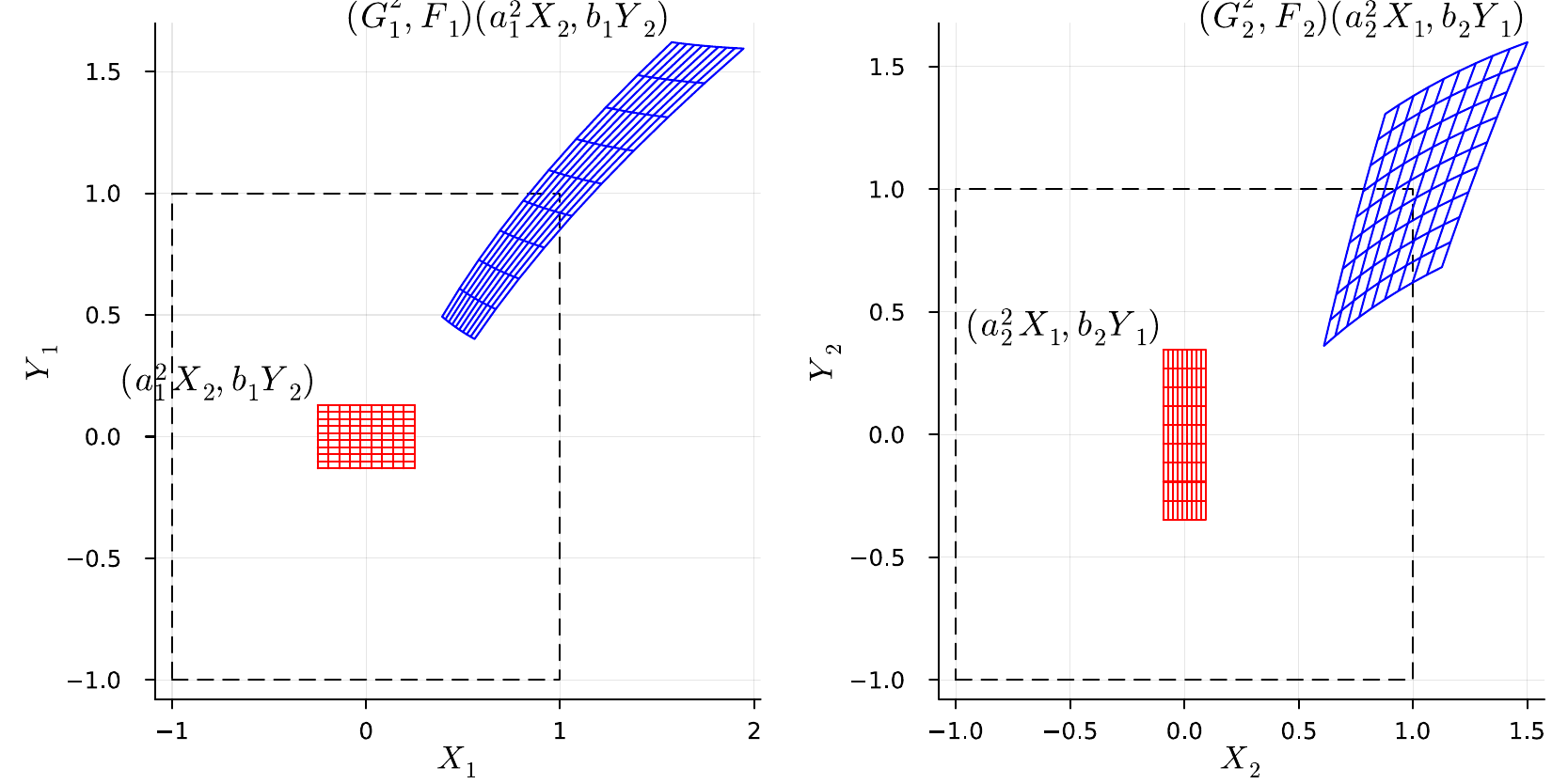}\\
(a) Real domain extension fails for domain $\Xi_0$.\\
\includegraphics[width=0.9\textwidth]{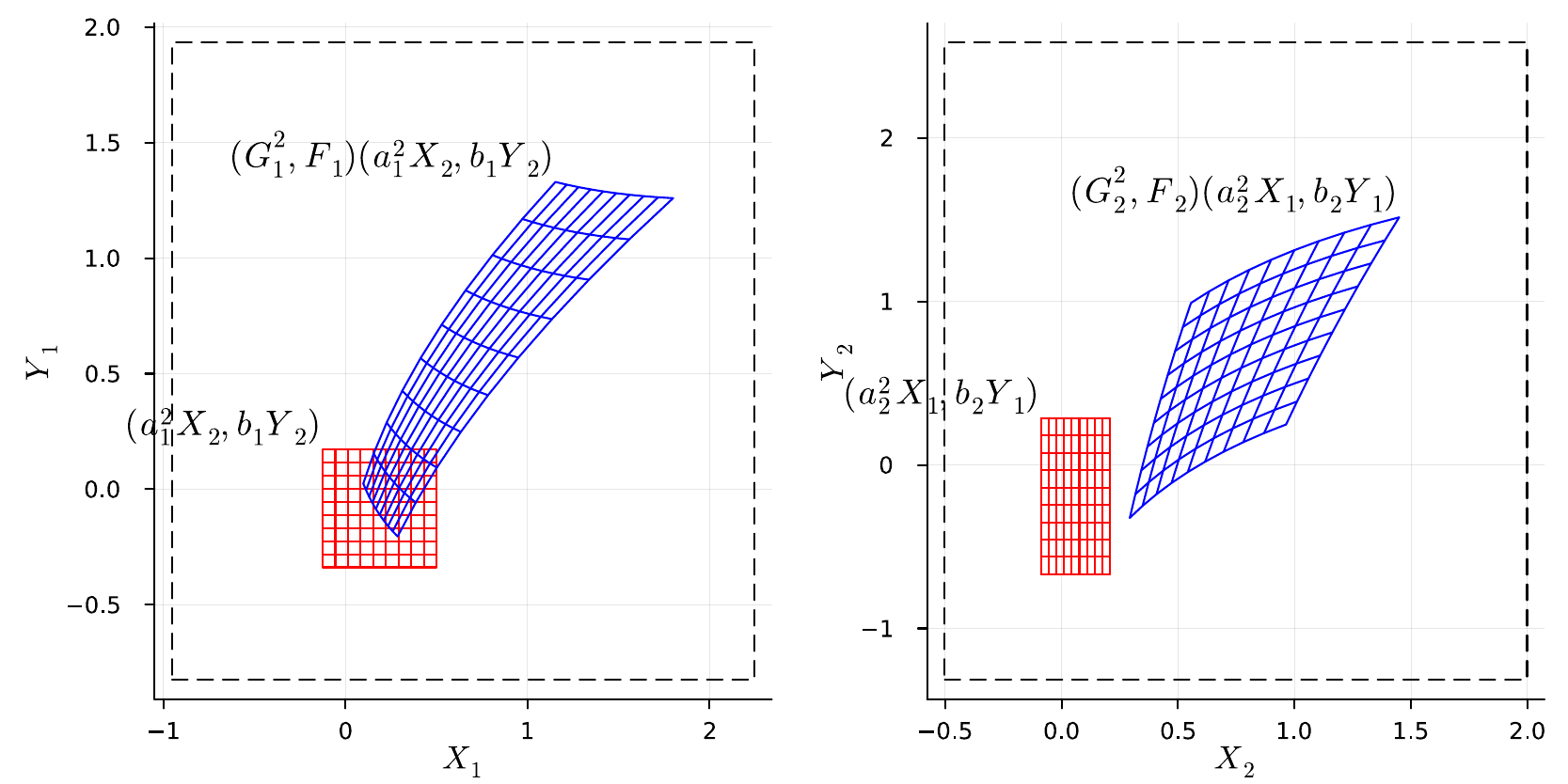}\\
(b) Real domain extension holds for domains $\Xi_1$,$\Xi_2$.
\end{tabular}
\caption{Real domain extension. Shown are the images of the real domains under the relevant maps in~(\ref{eqn:tde1})--(\ref{eqn:tde4}).
(a) Top: failure of real domain extension showing
containment 
$\overline{\mathrm{diag}(a_1^2,b_1)\Xi_0}\subset\Xi_0$ (left, red) and
$\overline{\mathrm{diag}(a_2^2,b_2)\Xi_0}\subset\Xi_0$ (right, red) but lack of containment 
$\overline{(G_1^2,F_1)\mathrm{diag}(a_1^2,b_1)\Xi_0}\not\subset\Xi_0$ (left, blue) and
$\overline{(G_2^2,F_2)\mathrm{diag}(a_2^2,b_2)\Xi_0}\not\subset\Xi_0$ (right, blue) for $\Xi_{0}:=\Re D(0,1)\!\times\! \Re D(0,1)=[-1,1]^2\subset\mathbb{R}^2$ (with boundary $\partial\Xi_0$ shown as dashed lines).
(b) Bottom:
showing
containments $\overline{\mathrm{diag}(a_1^2,b_1)\Xi_2}\subset\Xi_1$ (red) and $\overline{(G_1^2,F_1)\mathrm{diag}(a_1^2,b_1)\Xi_2}\subset\Xi_1$ (blue) on the left and
$\overline{\mathrm{diag}(a_2^2,b_2)\Xi_1}\subset\Xi_2$ (red) and $\overline{(G_2^2,F_2)\mathrm{diag}(a_2^2,b_2)\Xi_1}\subset\Xi_2$ (blue) on the right for
$\Xi_1:=\Re D(c_1,r_1)\!\times\! \Re D(d_1,s_1)\subset\mathbb{R}^2$ where
$c_1\simeq 0.64623$, $r_1=1.6$,
$d_1\simeq 0.55421$, $s_1=1.38$ and 
$\Xi_2:=\Re D(c_2,r_2)\!\times\! \Re D(d_2,s_2)\subset\mathbb{R}^2$ where
$c_2\simeq 0.74835$, $r_2=1.25$,
$d_2\simeq 0.63468$, $s_2=1.95$ (with boundaries $\partial\Xi_k$ shown as dashed lines).}
\label{fig:de_real}
\end{figure}

\begin{figure}
\centering
\begin{tabular}{c}
\includegraphics[width=0.9\textwidth]{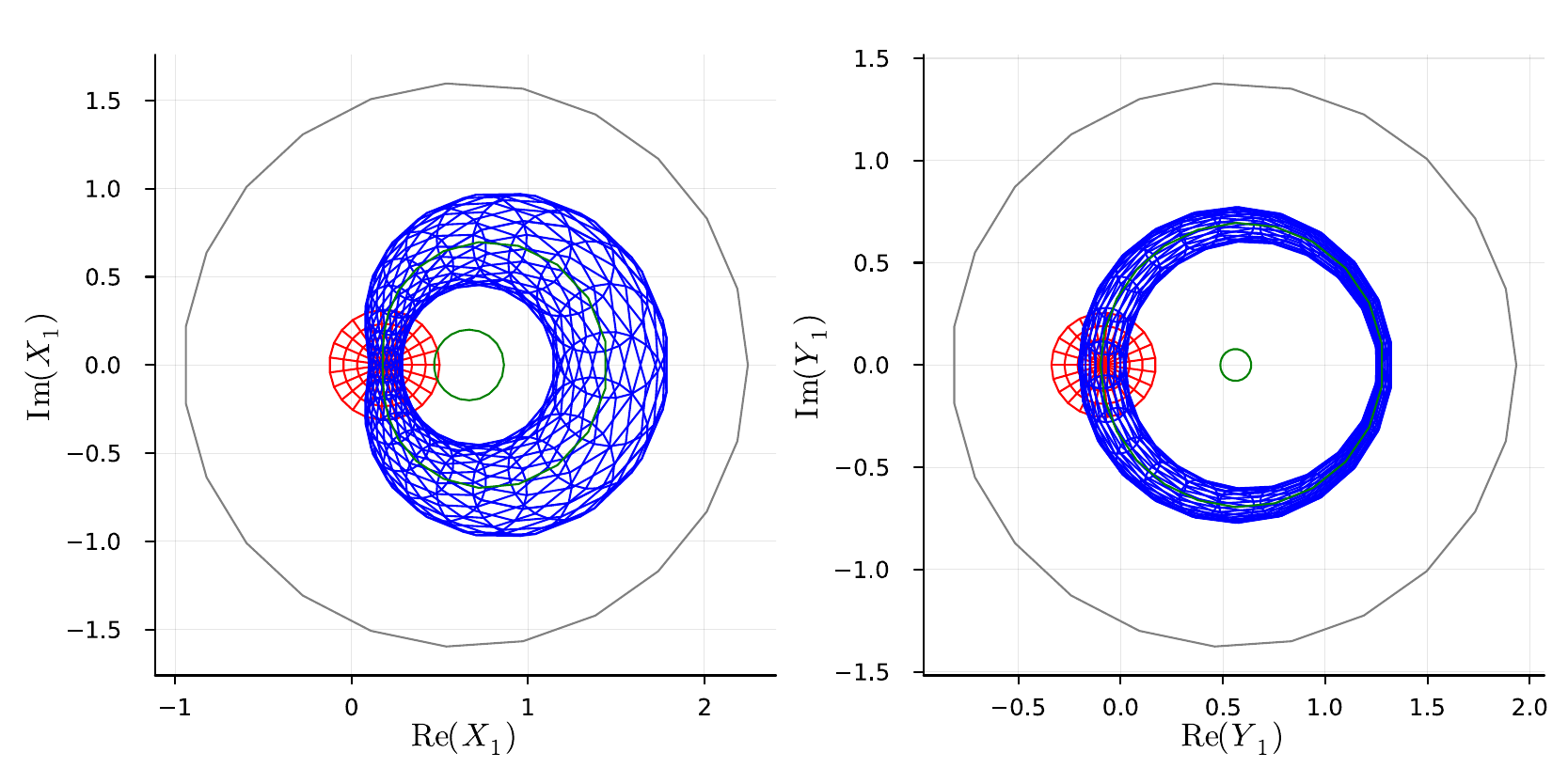}\\
(a) Containment in $\Omega_1$.\\
\includegraphics[width=0.9\textwidth]{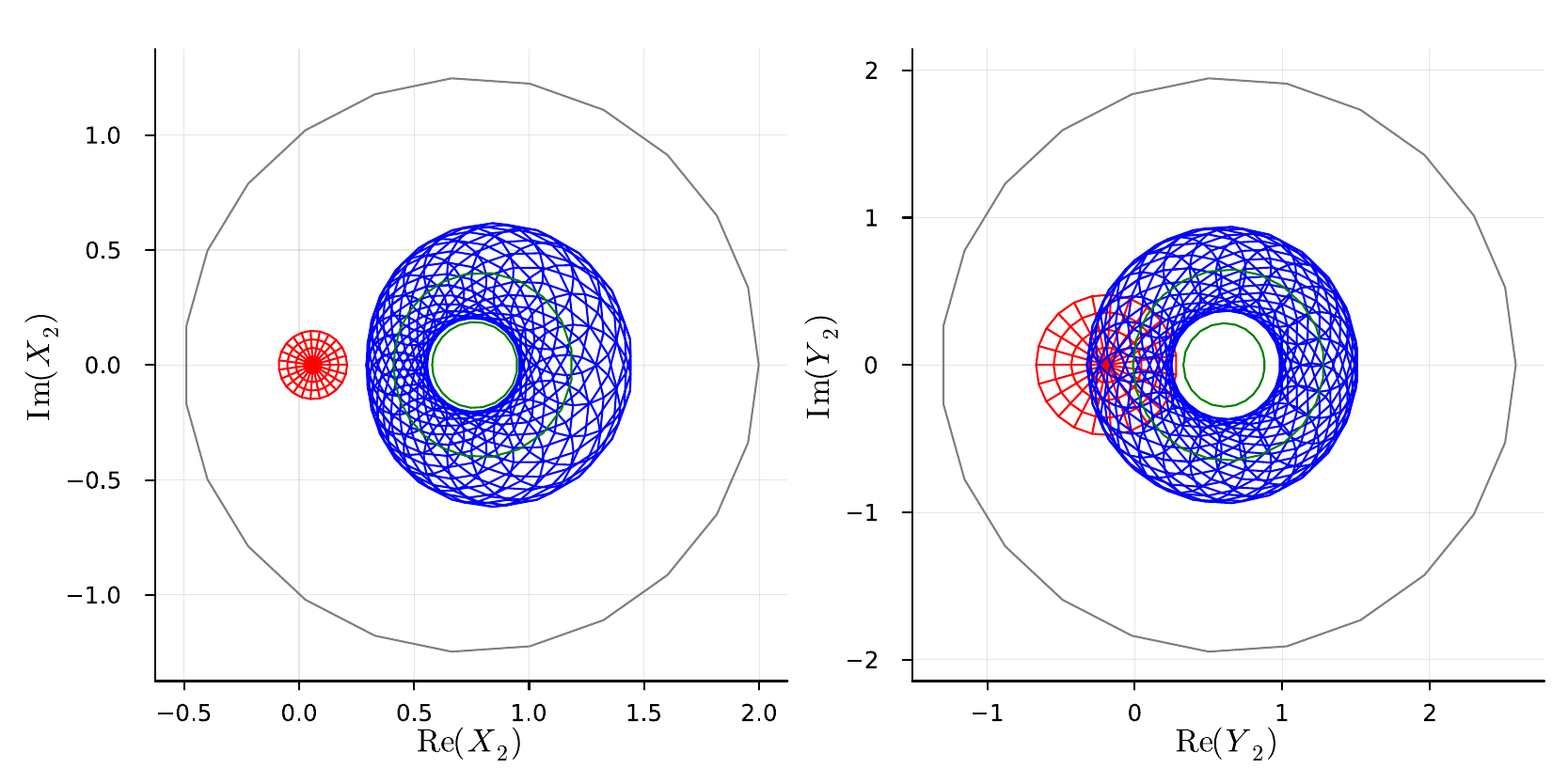}\\
(b) Containment in $\Omega_2$.
\end{tabular}
\caption{Complex domain extension. (a) Top:
$\Omega_1=D(c_1,r_1)\!\times\! D(d_1,s_1)$ (boundary shown as outer circles), where
$c_1\simeq 0.64623$, $r_1=1.6$,
$d_1\simeq 0.55421$, $s_1=1.38$. 
(b) Bottom:
$\Omega_2=D(c_2,r_2)\!\times\! D(d_2,s_2)$ (boundary shown as outer circles), where
$c_2\simeq 0.74835$, $r_2=1.25$,
$d_2\simeq 0.63468$, $s_2=1.95$.
(For illustration purposes only; the rigorous verification of domain extension uses a covering of the distinguished boundary by cartesian products of rectangles.) Shown are the images of the distinguished boundaries $\flat\Omega_k$ (tori) under the relevant maps in~(\ref{eqn:tde1})--(\ref{eqn:tde4}). Rescaled distinguished boundaries $\overline{\mathrm{diag}(a_k^2,b_k)\flat\Omega_j}$ are shown in red in colour version, with images $\overline{(G_k^2,F_k)\mathrm{diag}(a_k^2,b_k)\flat\Omega_j}$ of the distinguished boundaries shown in blue in colour version, and images of the degenerate tori (circles $\partial D(c_k,r_k)\times\{0\}$ and $\partial \{0\}\times \partial D(d_k,s_k)$) shown in green. (Compare (a) with Figure~\ref{fig:de_real}(b) left and (b) with Figure~\ref{fig:de_real}(b) right showing the restriction to real domains.)
}
\label{fig:de}
\end{figure}

Let $\flat\Omega\cong\mathbb{T}^2$ denote the distinguished boundary of the bidisc. We note that for analytic functions $f\in\mathscr{A}(\Omega_k)$ (continuous on $\overline{\Omega_k}$), the maximum principle for several complex variables ensures that the maximum modulus is obtained on $\flat\Omega_k$, and the values of the function there determine the values of the function on the closure of the domain (the distinguished boundary is the Shilov boundary for the polydisc)~\cite{rudin_function_1969}. Thus
\begin{equation}
\overline{f\left(\flat\Omega_k\right)}
\subset \Omega_j
\Rightarrow
\overline{f\left(\Omega_k\right)}
\subset \Omega_j,
\end{equation}
and it is therefore enough to establish that the closure of the image of a covering of the distinguished boundary lies strictly within the domain. We use this strategy to visualise complex domain extension for good domains in Figure~\ref{fig:de} with the corresponding restrictions to real domains shown in Figure~\ref{fig:de_real}(b).
In the computer-assisted portion of the proof, to verify these conditions we cover each disc $D(c,r)$, $D(d,s)$ with a union of rectangles, and take the pairwise cartestian products of these sets to produce a rigorous covering of the (filled) bidisc $\Omega$. We then compute rigorous enclosures of the images of this covering, valid for all maps in ball of functions defined in Section~\ref{sec:results}.

\section{Quasi-Newton operator}\label{sec:quasinewton}

In this section, we will formulate a Newton-like operator, $\Phi$, whose fixed points are two-cycles of $T$, and later establish that it is a uniform contraction mapping on a carefully-chosen ball $B(V^0,\rho)$ around the good approximate two-cycle $V^0$.

Consider the operator representing the action of $T$ on pairs $P_1=(G_1,F_1)\in\mathscr{A}(\Omega_1)^2$ and $P_2=(G_2,F_2)\in\mathscr{A}(\Omega_2)^2$. Recall that we denote
\begin{equation}
\mathcal{T}=\mbox{diag}(T_1,T_2):\mathscr{A}(\Omega_1)^2\oplus\mathscr{A}(\Omega_2)^2
\to\mathscr{A}(\Omega_2)^2\oplus\mathscr{A}(\Omega_1)^2,
\end{equation}
In order to find a two-cycle, we seek roots of the corresponding operator
\begin{equation}
\mathcal{F}:\mathscr{A}(\Omega_1)^2\oplus\mathscr{A}(\Omega_2)^2
\to\mathscr{A}(\Omega_1)^2\oplus\mathscr{A}(\Omega_2)^2,
\end{equation}
Note that the one-step Newton operator for $\mathcal{F}$ on the space $A:=\mathscr{A}(\Omega_1)^2\oplus\mathscr{A}(\Omega_2)^2$ of pairs of pairs $V=(P_1,P_2)=((G_1,F_1),(G_2,F_2))$ is given by
\begin{equation}
V \mapsto V - [D\mathcal{F}(V)]^{-1}[\mathcal{F}(V)].
\end{equation}
Ideally, we would like to establish that this operator is a contraction map on a suitably-chosen ball in the space. However, doing this directly would involve bounding the second Fr\'echet derivative of $\mathcal{F}$. Instead, we formulate a quasi-Newton operator $\Phi$ whose fixed-points are two-cycles of $T$ as follows.
Let $V^0$ denote the good approximate two-cycle found earlier.
For a fixed linear operator $\Gamma\simeq[I-S\Delta]^{-1}$ with $\Delta\simeq D\mathcal{T}(V^0)$, and $\Delta \mathcal{H}A=\{0_{A}\}$, the corresponding quasi-Newton operator is given by
\begin{equation}
\Phi: V \mapsto V - \Gamma [\mathcal{F}(V)],\label{eqn:quasinewtonop}
\end{equation}
with Fr\'echet derivative given by
\begin{equation}
D\Phi(V):\delta V\mapsto
\delta V - \Gamma[D\mathcal{F}(V)\delta V].\label{eqn:polynorms}
\end{equation}

\subsection{Contractivity}

We will show that the operator $\Phi$, is a uniform contraction mapping on a  ball $B(V^0,\rho)$ around $V^0$ by establishing the following bounds:
\begin{align}
\left\|\Phi(V^0)-V^0\right\|
&\le \varepsilon,\label{eqn:epsilonbound}\\
\|\Phi(U)-\Phi(V)\|
&\le \kappa\|U-V\|,
\quad\mbox{for all $U,V\in B(V^0,\rho)$ for some $\kappa<1$},\label{eqn:kappabound}
\end{align}
i.e., we bound the error on the approximate two-cycle $V^0$ and find a uniform Lipschitz constant for $\Phi$. It then suffices to verify the inequality $\varepsilon < \rho(1-\kappa)$ in order to show that the ball is mapped strictly inside itself.

In order to identify a uniform Lipschitz constant for~(\ref{eqn:kappabound}), we first note that the operator norm of $D\Phi(V)$ is bounded by the $(\mathrm{sum},\ell_1)$-norm  $\|\cdot\|_{\mathrm{sum},1}$ of the action of $D\Phi(V)$ on the (denumerable collection of) basis elements of $A$. Specifically,
\begin{equation}
\|D\Phi(V)\| := \sup_{\|U\|=1}\|D\Phi(V)U\| \le \sup_{1\le i\le 4;\ j,k\ge 0}\|D\Phi(V)e_{ijk}\|.\label{eqn:columnsumnorm}
\end{equation}
By the mean value theorem, we have
\begin{equation}
\left\Vert \Phi(U) - \Phi(V) \right\Vert \leq \sup_{W} \left\Vert D\Phi(W)\right\Vert \cdot \left\Vert U - V \right\Vert,
\end{equation}
which follows by making a one-dimensional argument along line segements $[U,V]$ and using convexity of the ball $B(V^0,\rho)$.

\subsection{Implementing the Quasi-Newton Operator}

The computation of bounds on Lipschitz constants in~(\ref{eqn:columnsumnorm}) relies on the form chosen for the fixed linear operator $\Gamma$ in~(\ref{eqn:quasinewtonop}).
Partitioning the basis of $A$ into components $\#_1,\#_2$ corresponding to the pairs $P_1,P_2$, the operator $\mathcal{F}$ may be decomposed as
\begin{equation}
\mathcal{F} := I - S\mathcal{T} = \left(\begin{array}{c|c}I&-T_2\\ \hline -T_1&I\end{array}\right),
\end{equation}
with Fr\'echet derivative given by
\begin{equation}
D\mathcal{F}(P_1, P_2) = \left( \begin{array}{c|c}
I & -DT_2(P_2)  \\
\hline
-DT_1(P_1) & I
\end{array} \right)\begin{array}{c}\#_1\\ \#_2\end{array}.
\end{equation}
Partitioned into polynomial and high-order parts, on each pair we approximate
\begin{equation}
DT_k(P_k^0) \simeq \Delta_k
:= \left( \begin{array}{c|c}
\Delta_{k,PP} & 0  \\
\hline
0 & 0
\end{array} \right)\begin{array}{c}\mathscr{P}\\ \mathscr{H}\end{array}.
\end{equation}
Permuting the basis elements of $(P_1,P_2)\in\mathscr{A}(\Omega_1)^2\oplus\mathscr{A}(\Omega_2)^2$, as follows,
\begin{align}
D\mathcal{F}(P_1, P_2)
&\simeq \left( \begin{array}{cc|cc}
I & 0 & -\Delta_{2,PP} & 0 \\
0 & I & 0 & 0 \\ 
\hline
- \Delta_{1,PP} & 0 & I & 0 \\
0 & 0 & 0 & I
\end{array} \right)\begin{array}{c}\mathscr{P}\#_1\\ \mathscr{H}\#_1\\ \mathscr{P}\#_2\\ \mathscr{H}\#_2\end{array}\\
&= \left( \begin{array}{cc|cc}
I &-\Delta_{2,PP} & 0 & 0 \\
- \Delta_{1,PP} & I & 0 & 0 \\ 
\hline
0 & 0 & I & 0 \\
0 & 0 & 0 & I
\end{array} \right)
\begin{array}{c}\mathscr{P}\#_1\\ \mathscr{P}\#_2\\ \mathscr{H}\#_1\\ \mathscr{H}\#_2\end{array}
=: I-S\Delta,
\end{align}
gives the latter in block-diagonal form, which enables us to write the inverse as
\begin{equation}
\Gamma
:= \left( \begin{array}{c|c}
\left( \begin{array}{cc} 
I &-\Delta_{2,PP} \\
- \Delta_{1,PP} & I
\end{array} \right) ^{-1}
&
\left( \begin{array}{cc} 
0 & 0 \\
0 & 0
\end{array} \right) \\
\hline
\left( \begin{array}{cc} 
0 & 0 \\
0 & 0
\end{array} \right)
&
\left( \begin{array}{cc} 
I & 0 \\
0 & I
\end{array} \right)
\end{array} \right)
=:
\left(\begin{array}{c|c}\Gamma_{PP}&0\\ \hline 0&I\end{array}\right)
\begin{array}{c}\mathscr{P}\\ \mathscr{H}\end{array}.\label{eqn:gammadecomposed}
\end{equation}
Note that the action of $\Gamma$ on the high-order part of the space is therefore the identity.
Let $V = (V_1,V_2,V_3,V_4):=(G_1, F_1, G_2, F_2)$ where each of the 4 maps are divided into their polynomial, higher order, and error parts; $V=V_P+V_H+V_E$ with $\|V_H\|\le v_H$ and $\|V_E\|\le v_E$. We will use the following partition,
\begin{equation}
\Gamma V = [\Gamma_{PP} V_P] + [V_H] + [\Gamma_{PP} \mathcal{P} V_E + \mathcal{H} V_E],\label{eqn:gammaparts}
\end{equation}
to define the polynomial, high-order, and error-parts (square-bracketed expressions in~(\ref{eqn:gammaparts})) of $\Gamma V$, respecively.
Denoting $W_E:=\Gamma_{PP}\mathcal{P}V_E+\mathcal{H}V_E$ note that
\begin{align}
\|(W_E)_1\|
&\le\max(1,\|\gamma_{11}\|)v_{1,E}
+\|\gamma_{12}\|v_{2,E}
+\|\gamma_{13}\|v_{3,E}
+\|\gamma_{14}\|v_{4,E},\\ 
\|(W_E)_2\|
&\le\|\gamma_{21}\|v_{1,E}
+\max(1,\|\gamma_{22}\|)v_{2,E}
+\|\gamma_{23}\|v_{3,E}
+\|\gamma_{24}\|v_{4,E},\\ 
\|(W_E)_3\|
&\le\|\gamma_{31}\|v_{1,E}
+\|\gamma_{32}\|v_{2,E}
+\max(1,\|\gamma_{33}\|)v_{3,E}
+\|\gamma_{34}\|v_{4,E},\\ 
\|(W_E)_4\|
&\le\|\gamma_{41}\|v_{1,E}
+\|\gamma_{42}\|v_{2,E}
+\|\gamma_{43}\|v_{3,E}
+\max(1,\|\gamma_{44}\|)v_{4,E},
\end{align}
in which $\Gamma_{PP}$ is decomposed into component maps $\gamma_{mn}:\mathcal{P}V_n\mapsto(\Gamma_{PP}\mathcal{P}V_n)_m$ with maximum column-sum norms $\|\gamma_{pq}\|$.
Using this decomposition takes advantage of the fact that $V_E=\mathcal{P}V_E+\mathcal{H}V_E$ is a disjoint partition so that $\|V_E\|=\|\mathcal{P}V_E\|+\|\mathcal{H}V_E\|\le v_E$. Contrast this with the decomposition $\Gamma V = [\Gamma_{PP} V_P] + [V_H+\mathcal{H} V_E] + [\Gamma_{PP} \mathcal{P} V_E]$ in which we would be forced to bound the middle (high-order) term by $\|V_H+\mathcal{H}V_E\|\le v_H+v_E$.

\subsection{High-order perturbations}

We will split the computation of the norms on the right-hand side of~(\ref{eqn:columnsumnorm}) into polynomial versus high-order basis elements. For the latter, $\delta V\in\mathcal{H}A$, $\mathcal{P}\delta V=\{0\}$, and
\begin{align}
D\Phi(V)\delta V
&= \delta V - \Gamma D\mathcal{F}(V)\delta V\label{eqn:uncancelled}\\
&= \delta V - \Gamma (\delta V - SD\mathcal{T}(V)\delta V)\\
&= \delta V - \Gamma\delta V + \Gamma SD\mathcal{T}(V)\delta V\\
&= \delta V - \delta V + \Gamma SD\mathcal{T}(V)\delta V\label{eqn:cancel}\\
&= \Gamma SD\mathcal{T}(V)\delta V,\label{eqn:highnorms}
\end{align}
which follows from~(\ref{eqn:gammadecomposed}).
Thus the expression~(\ref{eqn:polynorms}) may be used when bounding the norms $D\Phi(V)\delta V$ for polynomial basis elements, but expression~(\ref{eqn:highnorms}) must be used when bounding those for high-order basis elements in order to avoid the uncancelled terms $\delta V-\delta V$~(\ref{eqn:cancel}) implicit in~(\ref{eqn:uncancelled}). For example, computing the enclosure of an expression such as $\delta G_k-\delta G_k$ directly for $\delta G_k\in\mathscr{B}(\mathbf{0};1,0)$, where $\mathbf{0}$ denotes the constant zero function $\mathbf{0}:\mathbb{C}^2\to\mathbb{C},(z,w)\mapsto 0$, would give
\begin{equation}
\delta G_k-\delta G_k
\in\mathscr{B}(\mathbf{0};1,0)
-\mathscr{B}(\mathbf{0};1,0)
\subseteq\mathscr{B}(\mathbf{0};2,0),
\end{equation}
which precludes the computation of an upper bound less than one.
For the subexpression $D\mathcal{T}(V)$ appearing in~(\ref{eqn:highnorms}) and implicitly in~(\ref{eqn:polynorms}), we compute rigorous enclosures of the Fr\'echet derivative using the expressions given in~(\ref{eqn:frechett1})--(\ref{eqn:frechettn}).

\section{Results}\label{sec:results}

\subsection{Existence of the two cycle}

\begin{thm}[Existence of the C-type two-cycle]\label{thm:existence} The conjectured C-type two-cycle $V^*=((G_1^*,F_1^*),(G_2^*,F_2^*))$ exists. The state space scaling constants are bounded by
\begin{align}
a_1^*&\in[-0.4999055911,\ -0.4999055857],\\
b_1^*&\in[-0.3046871214,\ -0.3046871149],\\
a_2^*&\in[-0.1307721819,\ -0.1307721747],\\
b_2^*&\in[-0.3456965546,\ -0.3456965398],
\end{align}
giving, for the state space scalings associated with period-quadrupling,
\begin{align}
a^{*}:=a_1^*a_2^*&\in[0.1523147906,\ 0.1523147955],\\
b^{*}:=b_1^*b_2^*&\in[0.04520748832,\ 0.04520749271].
\end{align}
Thus, for comparison with earlier works,
\begin{align}
\alpha_1^*
:=1/a_1^*
&\in[-2.000377729,\ -2.000377707],\\
\beta_1^*
:=1/b_1^*
&\in[-7.646886669,\ -7.646886254],\\
\alpha_2^*
:=1/a_2^*
&\in[-3.282055430,\ -3.282055360],\\
\beta_2^*
:=1/b_2^*
&\in[-2.892710470,\ -2.892710346],
\end{align}
giving
\begin{align}
\alpha^{*}:=\alpha_1^*\alpha_2^*
&\in[6.565350378,\ 6.565350586],\\
\beta^{*}:=\beta_1^*\beta_2^*
&\in[22.12022698,\ 22.12022913].
\end{align}
\end{thm}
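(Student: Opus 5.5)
The proof is a contraction-mapping argument for the quasi-Newton operator $\Phi(V)=V-\Gamma[\mathcal{F}(V)]$ of Section~\ref{sec:quasinewton}, on the space $A=\mathscr{A}(\Omega_1)^2\oplus\mathscr{A}(\Omega_2)^2$ with the norm $\|\cdot\|_{\mathrm{sum},1}$ (or $\|\cdot\|_{\mathrm{max},1}$, whichever gives sharper bounds). We work on a closed ball $B(V^0,\rho)$ around the good approximate two-cycle $V^0$, using the domains $\Omega_1,\Omega_2$ chosen in Section~\ref{sec:domaindetails}.

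The argument proceeds in four steps.

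\begin{enumerate}
\item \textbf{Well-definedness.} For every $V$ in $B(V^0,\rho)$, we verify by rigorous enclosure that the operators $T_1,T_2$ are well defined and Fr\'echet differentiable.
\begin{itemize}
\item Cover each disc with rectangles and push the covering of $\overline{\Omega_k}$ through the rescalings and the inner maps $(G_k^2,F_k)\circ\mathrm{diag}(a_k^2,b_k)$.
\item Check the domain extension conditions (\ref{eqn:tde1})--(\ref{eqn:tde4}) with strict containment, uniformly over the function ball.
\item Check that the inner and outer composition norms are less than one. This keeps all compositions inside the classes $\mathscr{A}(\Omega_k)$, and the composition bounds of Section~\ref{sec:rigorousframework} then apply.
\item Enclose $a_k=G_k(G_k(0,0)^2,F_k(0,0))$ and $b_k$ uniformly over the ball, and confirm they are bounded away from $0$, and that $a_1a_2$ and $b_1b_2$ stay away from $0$ and $1$.
\end{itemize}

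\item \textbf{The bound $\varepsilon$.} Evaluate $\mathcal{F}(V^0)$ in function-ball arithmetic, apply $\Gamma$ using the splitting (\ref{eqn:gammaparts}), and take the norm. This gives $\varepsilon$ in (\ref{eqn:epsilonbound}).

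\item \textbf{The Lipschitz constant $\kappa$.} Using (\ref{eqn:columnsumnorm}), bound $\|D\Phi(W)\|$ uniformly over the whole ball, with $W$ represented as a single function ball.
\begin{itemize}
\item For each polynomial basis element $e_{ijk}$ with $j+k\le N$, bound $\|e_{ijk}-\Gamma D\mathcal{F}(W)e_{ijk}\|$ directly from (\ref{eqn:polynorms}). The Fr\'echet derivative is enclosed using (\ref{eqn:frechett1})--(\ref{eqn:frechettn}).
\item For all high-order basis elements at once, use the cancelled form (\ref{eqn:highnorms}), $\Gamma S D\mathcal{T}(W)\delta V$, with $\delta V$ ranging over a high-order unit ball $\mathscr{B}(\mathbf{0};1,0)$ in each of the four components.
\end{itemize}
The high-order case needs care. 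The terms $\delta G(\widetilde{G}^2,\widetilde{F})$ are high-order functions composed with inner maps whose composition norms are at most about $0.72$. Their contribution is therefore of order $0.72^{N+1}$, which is small for $N=24$. The constant-term corrections are kept small by the choice of centres from Proposition~\ref{prop:innerouter}. Taking the supremum gives $\kappa$.

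\item \textbf{Conclusion.} Check that $\varepsilon<\rho(1-\kappa)$. The contraction mapping theorem then gives a unique fixed point $V^*\in B(V^0,\rho)$ with $\mathcal{F}(V^*)=0$, since $\Gamma$ is invertible: $\Gamma_{PP}$ is an inverse of a matrix, checked nonsingular rigorously. By construction $V^*$ is a two-cycle of $T$, and hence, via the ansatz, of $R$.
\end{enumerate}

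The scaling bounds follow from the enclosures in step 1. Evaluate $a_k^*$ and $b_k^*$ in interval arithmetic over the final ball $B(V^0,\rho)$ (or over the smaller ball of radius $\varepsilon/(1-\kappa)$). Then obtain $a^*$, $b^*$, $\alpha_k^*$, $\beta_k^*$, $\alpha^*$ and $\beta^*$ by interval multiplication and reciprocation.

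The main obstacle is the uniform bound $\kappa<1$, particularly the high-order column. It requires that the error terms arising in $h_H\circ(g,f)$ from nonzero constant terms, and the derivative-then-compose terms $\partial_jH\circ(G,F)$, stay small. This is why the domains were tuned so carefully.

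My first attempt would be to choose $\rho$ of the order of $10^{-8}$–$10^{-7}$, matching the widths of the stated intervals. If $\kappa$ then comes out too large, the next move is to enlarge $N$ or refine the radii $(r_k,s_k)$ to reduce the composition norms.
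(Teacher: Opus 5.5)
Your proposal is essentially the paper's proof. It uses the same quasi-Newton operator $\Phi(V)=V-\Gamma\mathcal{F}(V)$, the bound $\varepsilon$ from a singleton ball at $V^0$, and a uniform $\kappa$ obtained from singleton balls for the $1300$ polynomial basis elements together with the cancelled form (\ref{eqn:highnorms}) on four high-order unit balls; it also verifies domain extension over the whole ball and checks $\varepsilon<\rho(1-\kappa)$. Two details differ from the paper: the contraction must be run in $\|\cdot\|_{\mathrm{sum},1}$, the norm for which (\ref{eqn:columnsumnorm}) holds, with the $\|\cdot\|_{\mathrm{max},1}$-ball used only as an enclosure, and the paper takes $\rho\approx 7.5\times10^{-10}$, just above $\varepsilon/(1-\kappa)$, rather than $10^{-8}$ or larger.

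The one step the paper adds is an explicit nondegeneracy check, $(G_1^*,F_1^*)\neq(G_2^*,F_2^*)$. You should state it, but it follows from your disjoint enclosures of $a_1^*$ and $a_2^*$: the normalisation $G_k^*(0,0)=F_k^*(0,0)=1$ gives $a_k^*=G_k^*(1,1)$, and $(1,1)\in\Omega_1\cap\Omega_2$. When you multiply out your enclosures you will also find that the printed intervals for $b_1^*$ and $a_2^*$ are transposed. The stated $a^*$, $\alpha_2^*$, $\beta_1^*$ and the composition norms in Section~\ref{sec:domainradii} all correspond to $a_2^*\approx-0.3047$ and $b_1^*\approx-0.1308$.
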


Proof (outline):
Taking $j+k\le {N=24}$, the domains defined in Section~\ref{sec:domaindetails}, and the good approximate two-cycle found in Section~\ref{sec:approxcycle1},
we compute the bound
\begin{equation}
\|\Phi(V^0)-V^0\|_{\mathrm{sum},1}
\le{\varepsilon=1.5089\times10^{-10}},
\end{equation}
with respect to the product space norm $\|(G_1,F_1,G_2,F_2)\|_{\mathrm{sum},1}$, using the singleton function ball
\begin{equation}
\mathscr{B}(V^0;0,0)
=\mathscr{B}(G_1^0;0,0)
\oplus\mathscr{B}(F_1^0;0,0)
\oplus\mathscr{B}(G_2^0;0,0)
\oplus\mathscr{B}(F_2^0;0,0).
\end{equation}
Then, on a ball of (pairs of pairs of) functions, 
\begin{align}
B(V^0,\rho,\|\cdot\|_{\mathrm{sum,1}})
&\subset B(V^0,\rho,\|\cdot\|_{\mathrm{max,1}})\\
&= B(G_1,\rho,\|\cdot\|_{\Omega_1,1})
\times B(F_1,\rho,\|\cdot\|_{\Omega_1,1})\nonumber\\
&\quad{}\times B(G_2,\rho,\|\cdot\|_{\Omega_2,1})
\times B(F_2,\rho,\|\cdot\|_{\Omega_2,1})\\
&\subset\mathscr{A}(\Omega_1)^2\oplus\mathscr{A}(\Omega_2)^2,
\end{align}
centered on the approximate two-cycle $V^0$, of radius {$\rho=7.5443\times10^{-10}>\varepsilon$}, we prove that
domain extension holds for the domains $\Omega_1,\Omega_2\subset\mathbb{C}^2$ given earlier. Note that the latter ball is enclosed by
\begin{equation}
B(V^0,\rho,\|\cdot\|_{\mathrm{max}, 1})
=\mathscr{B}(G_1^0;0,\rho)
\oplus\mathscr{B}(F_1^0;0,\rho)
\oplus\mathscr{B}(G_2^0;0,\rho)
\oplus\mathscr{B}(F_2^0;0,\rho).
\end{equation}

\begin{figure}
    \centering
    \includegraphics[width=0.65\textwidth]{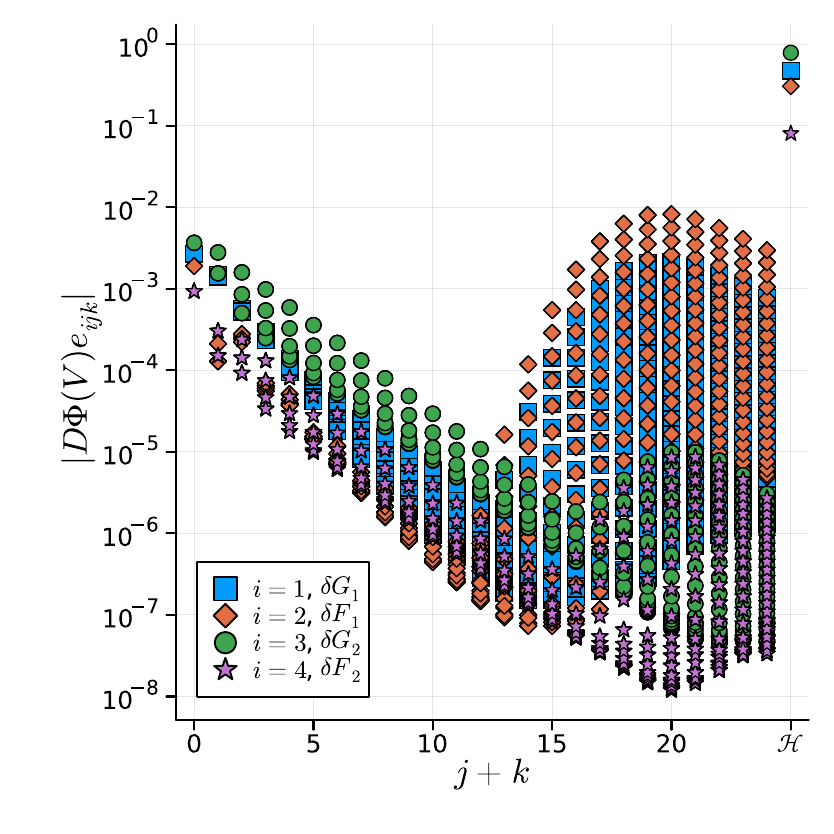}
    \caption{Upper bounds on the norms $\|D\Phi(V)e_{ijk}\|$ for $N_4=1300$ polynomial basis elements $e_{ijk}$, valid for all $V\in\mathscr{B}(V^0;0,\rho)$, plotted against the homogeneous degree $j+k\le N$ of the basis element (a log-scale is used for the vertical axis). The upper bounds on high-order norms (valid for all $e_{ijk}$ with $i=1,2,3,4$ and $j+k>N$) are displayed at the right-hand edge of the plot.}
    \label{fig:polynorms}
\end{figure}

Further, we prove that
the polynomial basis element norms of the form $\|D\Phi(V)\delta V\|$ are bounded by $0.0082192$ (Figure~\ref{fig:polynorms} plots upper bounds on individual norms against the homogeneous degree of the corresponding basis element). This bound is valid for all $V\in\mathscr{B}(V^0;0,\rho)$ and is computed using singleton function balls $\mathscr{B}(e_{ijk};0,0)\ni\delta V:=(\delta G_1,\delta F_1,\delta G_2,\delta F_2)$,
in which the $e_{ijk}$ are the basis elements for the product space $A:=\mathscr{A}(\Omega_1)^2\oplus\mathscr{A}(\Omega_2)^2$, e.g., $e_{3jk}=(\mathbf{0}, \mathbf{0}, e_{jk}, \mathbf{0})$, where $e_{jk}\in\mathscr{A}(\Omega_2)$.
Thus for $j+k\le N$, the corresponding polynomial perturbations $\delta G_\ell,\delta F_\ell\in\mathcal{P}\mathscr{A}(\Omega_\ell)$, written in the product basis, are enclosed using
\begin{align}
\mathscr{B}(e_{jk};0,0)\oplus\{\mathbf{0}\}
\oplus\{\mathbf{0}\}\oplus\{\mathbf{0}\}
&\ni\delta V
=(\delta G_1, \mathbf{0},\mathbf{0},\mathbf{0}),\\
\{\mathbf{0}\}\oplus\mathscr{B}(e_{jk};0,0)
\oplus\{\mathbf{0}\}\oplus\{\mathbf{0}\}
&\ni\delta V
=(\mathbf{0},\delta F_1,\mathbf{0},\mathbf{0}),\\
\{\mathbf{0}\}\oplus\{\mathbf{0}\}
\oplus\mathscr{B}(e_{jk};0,0)\oplus\{\mathbf{0}\}
&\ni\delta V
=(\mathbf{0},\mathbf{0},\delta G_2,\mathbf{0}),\\
\{\mathbf{0}\}\oplus\{\mathbf{0}\}
\oplus\{\mathbf{0}\}\oplus\mathscr{B}(e_{jk};0,0)
&\ni\delta V
=(\mathbf{0},\mathbf{0},\mathbf{0},\delta F_2),
\end{align}
for $i=1,2,3,4$, respectively. For $N=24$ there are $N_4=1300$ such basis elements.
We compute the corresponding norms for high-order perturbations using $4$ nondegenerate function balls (of high-order radius $1$) written in the product basis as
\begin{align}
\mathscr{B}(\mathbf{0};1,0)\oplus\{\mathbf{0}\}
\oplus\{\mathbf{0}\}\oplus\{\mathbf{0}\}
&\ni\delta V
=(\delta G_1, \mathbf{0},\mathbf{0},\mathbf{0}),\\
\{\mathbf{0}\}\oplus\mathscr{B}(\mathbf{0};1,0)
\oplus\{\mathbf{0}\}\oplus\{\mathbf{0}\}
&\ni\delta V
=(\mathbf{0},\delta F_1,\mathbf{0},\mathbf{0}),\\
\{\mathbf{0}\}\oplus\{\mathbf{0}\}
\oplus\mathscr{B}(\mathbf{0};1,0)\oplus\{\mathbf{0}\}
&\ni\delta V
=(\mathbf{0},\mathbf{0},\delta G_2,\mathbf{0}),\\
\{\mathbf{0}\}\oplus\{\mathbf{0}\}
\oplus\{\mathbf{0}\}\oplus\mathscr{B}(\mathbf{0};1,0)
&\ni\delta V
=(\mathbf{0},\mathbf{0},\mathbf{0},\delta F_2),
\end{align}
where $\delta G_\ell,\delta F_\ell\in\mathcal{H}\mathscr{A}(\Omega_\ell)$.
Note that these are high-order projections of the unit ball; each is the convex hull of all high-order basis elements for one of the component maps (this is in contrast to the function balls containing the polynomial basis elements, all of which are singletons). The four corresponding upper bounds produced are
$0.47570$, $0.30583$, $0.78693$, and $0.08044$, respectively. We therefore have the upper bound
\begin{equation}
\|D\Phi(V)\|\le{\kappa=0.78693}<1,
\end{equation}
which therefore gives a uniform Lipschitz constant for $\Phi$, valid for all $V\in\mathscr{B}(V^0;0,\rho)$.
Since {$\varepsilon<\rho(1-\kappa)$} then, by the Contraction Mapping Theorem,  a locally-unique two-cycle $V^*=(G_1^*,F_1^*,G_2^*,F_2^*)\in\mathscr{B}(V^0;0,\rho)$ of $T$ (and hence that of $R$) exists.\qed

\subsubsection*{Nondegeneracy}

In order to prove that the two-cycle is non-degenerate, i.e., that we do not have $(G_1^*,F_1^*)=(G_2^*,F_2^*)$, in the sense that both pairs represent the same analytic functions, we note that different power series representing the same analytic function on overlapping domains must produce the same function value at all points in their intersection. It is therefore enough to find a single point $(Z,W)\in\Omega_1\cap\Omega_2$
for which the intervals enclosing the function values $G_1(Z,W)$ and $G_2(Z,W)$, valid for all function pairs $(G_1,G_2)\in\mathscr{B}(G_1^0;0,\rho)\oplus\mathscr{B}(G_2^0;0,\rho)$, 
are disjoint (similarly for $F_1(Z,W)$ and $F_2(Z,W)$). In the computer-assisted portion of the proof, we do this for the point $(Z,W)=(1/2,1/2)$, for which
\begin{align}
G_1(1/2,1/2)\in[0.18062, 0.18063]
&<[0.34655, 0.34656]\ni G_2(1/2,1/2),\\
F_1(1/2,1/2)\in[0.20474, 0.20475]
&<[0.56211, 0.56212]\ni F_2(1/2,1/2).
\end{align}
It is also enough
that intervals bounding the $a_k^*$ are disjoint (similarly for the $b_k^*$).

\section{Computational considerations}\label{sec:computational}

The computer-assisted portion of the proof was written in the \texttt{Julia} Programming Language 1.10.1~\cite{bezanson2017julia}. The interval and function ball arithmetic were implemented using standard 64-bit floating point numbers together with the IEEE-754 directed rounding modes in order to guarantee rigorous enclosures for basic operations. This is in contrast to previous work including~\cite{burbanks_rigorous_2021a,burbanks_existence_2023} (with supporting software~\cite{burbanks_2021_code_5608449,burbanks_2022_code_7139006}) in which multiprecision arithmetic was used via \texttt{BigFloat}; even with the relatively low truncation degree $N=24$ used in this paper, working with maps of two variables leads to prohibitive computation times. Parallel computation was used for matrix elements, and for the computation of the Lipschitz constants by bounding the action of the Fr\'echet derivative on  polynomial and high-order basis elements. Multiprocessing was chosen for this  (via the \texttt{Julia} Distributed package) in order to respect the safety of the directed rounding modes at the per-process level. Closures were used in the implementation of expressions for the Fr\'echet derivative in order to avoid recomputation of common subexpressions. Choosing good domains is crucial for the proof to succeed with low truncation degree $N$ for the polynomial parts of the function spaces; the computation of the bounds on $\|D\Phi(V)\delta V\|$ provides the biggest challenge determined by the size $N_4+4=(N+1)(N+2)/2+4$ of the rigorous enclosure of the basis. The truncation degree must be chosen sufficiently high in order that the corresponding norms are less than $1$ for high-order perturbations $\delta V$. Binary arithmetic was used in the computer-assisted portion of the proof; all bounds quoted in this paper have been safely rounded to nearby decimals. To ensure that the proof can be reproduced, the code and data files are provided in a Zenodo repository~\cite{burbanks_2026_ctype_code}.

\section{Conclusions and future work}\label{sec:conclusions}

We have proved the existence of the RG two-cycle of C-type conjectured by Kuznetsov et al.~\cite{kuznetsov_perioddoubling_1997}.
This extends previous work (on the FS-type RG fixed point~\cite{burbanks_existence_2023}) for non-invertible maps of the plane from unidirectionally-coupled to fully-coupled pairs of maps. It also extends the framework from existence proofs for RG fixed points to proofs for periodic cycles.
Together with an upcoming publication, in which we prove the existence of the so-called FQ-type RG fixed point, this work helps to complete the picture for two-dimensional non-invertible maps given in~\cite{kuznetsov_perioddoubling_1997}.

We note that hyperbolicity of the C-type two-cycle remains an open conjecture: we would need to show that the relevant parts of the spectra $\sigma(D(R^2)(g_k^*,f_k^*))$ are contained strictly in the open unit disc, except for $2$ relevant expanding eigenvalues
(each of unit multiplicity) indicated earlier.
Numerically, we find that the leading elements of the spectrum of $D(T^2)(G_k^*,F_k^*)$ are $\delta_1\approx 92.431$, $\delta_2\approx 4.1924$, with an eigenvalue $\delta_3=\beta^*\approx 22.120$ that corresponds to a coordinate change, with the largest non-expanding eigenvalue given by $\approx 0.93326$. These values are compatible with those computed by Kuznetsov and Sataev~\cite{kuznetsov_perioddoubling_1997,kuznetsov_birth_2008}. Note that the spectrum of $D(R^2)(g_k^*,f_k^*)$ as formulated in~\cite{kuznetsov_perioddoubling_1997,kuznetsov_birth_2008} additionally has irrelevant expanding eigenvalues $\delta_4=\alpha^*\approx 6.5653$ and $\delta_5=\beta^*/\alpha^*\approx 3.3692$ that correspond to infinitesimal coordinate changes; these do not appear for $D(T^2)(G_k^*,F_k^*)$ as formulated in this paper due to the symmetry imposed by the ansatz and the form chosen for the normalisation conditions on $R$ and $T$.

This work provides a further step in proving the conjectured picture that a family of FS-type RG fixed points gives rise to a family of C-type RG two-cycles by a period-doubling in the dynamics of the renormalisation operator $R$ itself~\cite{kuznetsov_birth_2008}. Numerical evidence for a bifurcation from the FS-type fixed point for maps with critical point of degree $d=2$ begins with the spectrum of the derivative of $R$ there, which has a weakly expanding relevant eigenvalue $\lambda\simeq-1.018$ (close to $-1$). Thorough numerical investigations led to the conjecture that $d$ itself is a bifurcation parameter in an extended space of functions, and that $R$ undergoes period-doubling at a critical value $d=d_c<2$ resulting in a family of C-type two-cycles that intersects $d=2$ at the two-cycle studied in this paper. Numerically, it appears that the codimension $4$ FS-type fixed point has a relevant eigenvalue whose square collides with the irrelevant eigenvalue $\delta_3$ for the codimension $2$ C-type two-cycle at the RG bifurcation point. Tackling this question, one would begin by proving persistence of the FS-type fixed point and C-type two-cycle with perturbations of $d$ by extending the space (or equivalently by adjusting the operator) to incorporate $d$ as a parameter and using the implicit function theorem.

Recall that the C-type scaling can be observed in Rossler systems under external driving~\cite{kuznetsov_universality_2001} and more generally in systems
where variation of one parameter gives rise to period-doublings and another to saddle node bifurcations. A striking application of the C-type criticality is in certain biological models including those for nephron autoregulation by
Laugesen et. al. 2011~\cite{laugesen_modelling_2011}. It would be useful to identify other examples of dynamical systems that display the C-type criticality.

Recent work by Breden, Gonzalez, and Mireles James~\cite{breden_validated_2024} extends computer-assisted proofs for renormalisation operators by using Chebyshev rather than Taylor series,  together with bounding the Discrete Fourier Transform. The work applies bounds on discretisation errors, alongside those on truncation errors, to the Feigenbaum-Cvitanovi\'c RG operator and its generalisations to period $m$-tuplings. This results in a framework better-suited to real analytic maps. An extension to maps of multiple variables would enable a more efficient version of the proof shown here.

A variety of other universal scaling scenarios have been conjectured (supported by strong numerical evidence) by Kuznetsov, Kuznetsov, and Sataev ~\cite{kuznetzov_new_1992,kuznetsov_multiparameter_2005,kuznetsov_variety_1997,kuznetsov_variety_1993}. These provide additional avenues for making computer-assisted proofs in this area.

\section{Acknowledgements}

Andrew Burbanks would like to thank the Isaac Newton Institute
for Mathematical Sciences (INI), Cambridge, for support and
hospitality during an INI Retreat. This work was supported by EPSRC grant EP/Z000580/1.
The authors would also like to thank the International Centre for Mathematical Sciences (ICMS), Edinburgh, for support via Workshop Grant~2416-BUR, the London Mathematical Society (LMS) for support via the Scheme~1 Conference Grant~12502, and the Glasgow Mathematical Journal Trust (GMJT) for support via the GMJT ICMS Block Grant 2026/2027.
Zainab Rahman was supported by a PhD bursary from the School of Mathematics and Physics at the University of Portsmouth, UK,
through the REF2021 Quality-related Research (QR) funding allocation.


\section*{Data Availability Statement}

The data that support the findings of this study are openly available at the following Zenodo repository~\cite{burbanks_2026_ctype_code}.

\section*{Conflict of interest}

The authors have no conflicts of interest to disclose.

\section*{ORCID iDs}

\begin{itemize}

\item Andrew Burbanks \href{https://orcid.org/0000-0003-0685-6670}{https://orcid.org/0000-0003-0685-6670}

\item Zainab Rahman \href{https://orcid.org/0009-0000-6028-8358}{https://orcid.org/0009-0000-6028-8358}

\item Maria Pickett \href{https://orcid.org/0000-0002-3601-1989}{https://orcid.org/0000-0002-3601-1989}

\end{itemize}

\bibliographystyle{plainurl}
\bibliography{references}

\end{document}